\documentclass[a4paper,11pt]{amsart}
\usepackage[T1]{fontenc}
\usepackage[english]{babel}
\usepackage[cp1252]{inputenc}
\usepackage{amsthm}
\usepackage{amsmath}
\usepackage{amsfonts}
\usepackage{amssymb}
\usepackage{hyperref}
\usepackage{color}
\usepackage{caption}
\usepackage{indentfirst}
\usepackage{amssymb}
\usepackage{eufrak}
\usepackage{mathrsfs}
\usepackage{xypic}
\usepackage[pdftex]{graphicx}
\usepackage{tikz}
\usepackage{booktabs}

\theoremstyle{plain}

\newcommand{\R}{\mathbb R}
\newcommand{\C}{\mathbb C}

\newcommand{\Z}{\mathbb Z}

\newcommand{\Log}{\mathrm{Log}\,}

\newcommand{\PP}{\mathbb P}

\newcommand{\Crit}{\operatorname{Crit}}

\newcommand{\CA}{\mathcal C\mathcal A}

\newcommand{\Sing}{\operatorname{Sing}}
\newcommand{\Reg}{\operatorname{Reg}}

\newcommand{\rank}{\operatorname{rank}}

\newcommand\restr[2]{{
  \left.\kern-\nulldelimiterspace 
  #1 
  \right|_{#2} 
  }}

\newtheorem{criterion}{Criterion}

\newtheorem{remark}{Remark}[section]
\newtheorem*{mtheorem*}{Main Theorem}
\newtheorem{theorem}{Theorem}[section]

\newtheorem{proposition}{Proposition}[section]

\newtheorem{lemma}{Lemma}[section]

\begin{document}
\title{On the Analyticity of Amoeba Contours} 
\author{Mounir Nisse}

\address{Mounir Nisse\\
Department of Mathematics, Xiamen University Malaysia, Jalan Sunsuria, Bandar Sunsuria, 43900, Sepang, Selangor, Malaysia.
}
\email{mounir.nisse@gmail.com, mounir.nisse@xmu.edu.my}

\thanks{}  


\subjclass[2010]{14P15, 32B20, 14T20}
\keywords{Amoeba, amoeba contour, logarithmic map, logarithmic Gauss map,
critical locus, real-analytic hypersurface, semianalytic set,
subanalytic set}

\maketitle

\begin{abstract}
The contour of an amoeba records the critical values of the logarithmic map and forms a fundamental interface between complex algebraic geometry and real-analytic geometry. Building on the known semianalyticity of amoeba contours, we prove the stronger statement that the componentwise exponential image of the contour of any algebraic hypersurface in $(\mathbb C^*)^n$ is semialgebraic. For a smooth hypersurface, we establish a natural criterion guaranteeing that the contour is a closed real-analytic hypersurface: the logarithmic Gauss map is transverse to $\mathbb RP^{n-1}$ and the logarithmic map has maximal rank along its critical locus. We then show that neither hypothesis is necessary, since singular or ramified critical parametrizations may still have complete real-analytic images. Finally, we derive intrinsic restrictions imposed by analyticity. In particular, every irreducible contour component of dimension $n-1$ is generated, over a dense open subset of its regular locus, by a real-analytic critical stratum on which the logarithmic map has maximal rank. These results distinguish the analytic geometry of the contour from the singularities of its critical parametrization and answer the question of Lang, Shapiro, and Shustin concerning effective sufficient conditions for analyticity.
\end{abstract}


\section*{Introduction and preliminaries}

Amoebas associate real geometric objects with complex algebraic varieties through the logarithmic map. If $V\subset(\C^*)^n$ is algebraic, its amoeba is $\mathcal A_V=\Log(V)\subset\R^n$, where $\Log(z_1,\ldots,z_n)=(\log|z_1|,\ldots,\log|z_n|)$. The critical values of $\Log$ form the contour $\CA_V$, a fundamental subset governing the geometry of the amoeba and the singular behavior of its logarithmic projection; see \cite{Mikhalkin00,Mikhalkin04,Mikhalkin04Pants,Lang19,LangShapiroShustin21}.

The main purpose of this note is to determine the precise analytic nature of amoeba contours. Our first result proves that if $V=\{f=0\}\subset(\C^*)^n$ is a reduced algebraic hypersurface, then $\CA_V=\Log(\Crit(\Log|_{V_{\mathrm{sm}}}))$ is always semianalytic. We establish the stronger statement that $\operatorname{Exp}(\CA_V)\subset(0,\infty)^n$ is semialgebraic. The proof replaces the angular variables by algebraic coordinates on products of circles, describes the logarithmic critical locus by polynomial equations, inequalities, and Jacobian minors, and applies the Tarski--Seidenberg theorem. This shows that semianalyticity is the optimal unconditional regularity supplied by algebraicity.

Our second result gives a natural sufficient criterion for the contour to be analytic. For a smooth hypersurface $H=\{f=0\}$, let $\gamma_H:H\to\C\PP^{n-1}$ be the logarithmic Gauss map given by $\gamma_H(z)=[z_1f_{z_1}(z):\cdots:z_nf_{z_n}(z)]$. Since $C_H=\Crit(\Log|_H)=\gamma_H^{-1}(\R\PP^{n-1})$, transversality of $\gamma_H$ to $\R\PP^{n-1}$ makes $C_H$ a smooth real-analytic manifold of dimension $n-1$. We prove that if $\Log|_{C_H}$ has rank $n-1$ everywhere, then $\CA_H$ is a closed real-analytic hypersurface of $\R^n$. The contour may have self-intersections or several local branches, but properness of the logarithmic map ensures that only finitely many branches occur near each point and that their union is defined by one real-analytic equation.

We then show that neither transversality of the logarithmic Gauss map nor immersion of the entire critical locus is necessary for analyticity of the contour. This distinction is essential: a singular or ramified critical locus may still have a complete analytic image. We therefore derive intrinsic necessary conditions on the contour itself. A real-analytic contour must be locally closed, locally finite in its branch structure, free of incomplete one-sided image germs, and have real-algebraic tangent cones. Most importantly, if $Y$ is an irreducible local component of $\CA_H$ of dimension $n-1$, then there exists a dense open subset $Y^\circ\subset\Reg(Y)$ such that every $x\in Y^\circ$ has a preimage $p\in C_H\cap\Log^{-1}(x)$ lying on a real-analytic stratum $S\subset C_H$ for which $\operatorname{rank}d(\Log|_S)_p=n-1$. Thus maximal rank is necessary generically on at least one contributing stratum, although it may fail at exceptional ramification points.

We recall only the notions needed for these results. A subset of a real-analytic manifold is analytic if it is locally the common zero set of finitely many real-analytic functions, and it is semianalytic if it is locally a finite union of sets defined by analytic equalities and inequalities. Semialgebraic sets are defined similarly using polynomials and are preserved by polynomial projections \cite{BCR98}. Proper real-analytic images of semianalytic sets are generally subanalytic rather than semianalytic \cite{Hironaka73,BM88}; the stronger conclusion obtained here comes from the semialgebraic description before applying the logarithm.

The proofs also use two structural facts. The restriction of $\Log$ to a closed algebraic hypersurface is proper because inverse images of compact subsets of $\R^n$ lie in compact products of annuli. Moreover, the critical locus admits a locally finite real-analytic Whitney stratification compatible with the rank loci of $\Log$ \cite{Hardt75,Hironaka73,Whitney65}. The images of strata of rank at most $n-2$ have dimension at most $n-2$ and therefore cannot contain an open subset of an analytic contour component of dimension $n-1$. Together, algebraic elimination, properness, and rank-adapted stratification provide a concise description of the passage from semianalyticity to analyticity and isolate the genuinely necessary geometric conditions.


\section{The contour of an amoeba is a semianalytic subset}

We give a complete proof for algebraic hypersurfaces in the complex algebraic torus. The statement is understood with the standard definition of the contour as the set of critical values of the logarithmic map restricted to the smooth locus. The argument also explains why it is important that the hypersurface is algebraic: after replacing logarithmic coordinates by positive modulus coordinates, the critical locus and its image are governed by semialgebraic geometry.

 A subset $A$ of a real-analytic manifold $M$ is a real-analytic subset if, for every $a\in M$, there are an open neighborhood $U$ of $a$ and finitely many real-analytic functions $g_1,\ldots,g_s$ on $U$ such that $A\cap U=\{x\in U:g_1(x)=\cdots=g_s(x)=0\}$. In particular, an analytic subset is locally closed in the ambient space and is locally defined by equalities only. This is substantially stronger than semianalyticity, which permits finite Boolean combinations of analytic equalities and inequalities.

\begin{theorem}
Let $n\geq 2$, let $f\in\C[z_1^{\pm1},\ldots,z_n^{\pm1}]$ be a nonzero Laurent polynomial, and let $V=\{z\in(\C^\ast)^n:f(z)=0\}$. Denote by $V_{\mathrm{sm}}$ the smooth locus of the reduced hypersurface associated with $V$. Let $\Log:(\C^\ast)^n\to\R^n$ be defined by $\Log(z_1,\ldots,z_n)=(\log|z_1|,\ldots,\log|z_n|)$. Then the contour
$
\mathcal C\mathcal A_V=\Log\bigl(\Crit(\Log|_{V_{\mathrm{sm}}})\bigr)
$
is a semianalytic subset of $\R^n$. In fact, its image under the componentwise exponential map $\operatorname{Exp}:\R^n\to(0,\infty)^n$, $\operatorname{Exp}(x_1,\ldots,x_n)=(e^{x_1},\ldots,e^{x_n})$, is semialgebraic.
\end{theorem}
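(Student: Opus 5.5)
We first reduce to the reduced polynomial and translate everything into real algebraic coordinates. Replace $f$ by a generator $g$ of the radical of $(f)$ in $\C[z^{\pm1}]$; after multiplying by a monomial we may take $g$ to be a genuine polynomial not divisible by any $z_j$. The zero set in the torus is unchanged, and $V_{\mathrm{sm}}=\{z\in(\C^\ast)^n: g(z)=0,\ \nabla g(z)\neq0\}$. Write $z_j=r_j u_j$ with $r_j>0$ and $u_j=a_j+ib_j$, $a_j^2+b_j^2=1$. This identifies $(\C^\ast)^n$ semialgebraically with $(0,\infty)^n\times(S^1)^n\subset\R^{3n}$. Under this identification $\operatorname{Exp}\circ\Log$ becomes the coordinate projection $\pi(r,a,b)=r$, which is polynomial.

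Next we describe the critical locus by polynomial conditions. At a smooth point, the condition that $\Log|_{V_{\mathrm{sm}}}$ fails to be a submersion, i.e.\ that $d\Log|_{T_zV}$ has real rank $<n$, is the same as saying that $\pi|_{V_{\mathrm{sm}}}$ is not a submersion, since $\log$ is a diffeomorphism $(0,\infty)\to\R$. Concretely, I would use the equivalent condition $\gamma(z)=[z_1g_{z_1}:\cdots:z_ng_{z_n}]\in\R\PP^{n-1}$. This means that all $2\times 2$ minors $\operatorname{Im}\bigl(z_jg_{z_j}\overline{z_kg_{z_k}}\bigr)$ vanish. One can derive it directly: $d\Log$ on $T_zV$ misses the direction $v\in\R^n$ iff the real linear form $\sum_j v_j\,d\log|z_j|$ vanishes on $\ker dg$. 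Equivalently, $\sum_j v_j\,dz_j/z_j$, whose real part is the above form, is a complex multiple of $dg$ restricted to the real parts, which forces $z\nabla g$ to be proportional to a real vector. Alternatively, and avoiding this computation, one writes the real Jacobian of $(\operatorname{Re} g,\operatorname{Im} g)$ together with the constraints and expresses the rank drop as the vanishing of all maximal minors of the stacked matrix of $d(\operatorname{Re}g,\operatorname{Im}g,a_j^2+b_j^2-1)$ and $d r_1,\dots,dr_n$. Either way, $\Crit(\Log|_{V_{\mathrm{sm}}})$ corresponds to a semialgebraic set $\Sigma\subset\R^{3n}$. It is defined by the polynomial equations $\operatorname{Re}g=\operatorname{Im}g=0$, $a_j^2+b_j^2=1$, the minor equations, the inequalities $r_j>0$, and the open condition $\sum_j|g_{z_j}|^2>0$, which is polynomial in $(r,a,b)$.

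We then apply Tarski--Seidenberg. $\operatorname{Exp}(\CA_V)=\pi(\Sigma)$ is the image of a semialgebraic set under a polynomial map, hence semialgebraic \cite{BCR98}. Finally, $\CA_V=\Log(\pi(\Sigma))$ is the image of a semialgebraic subset of $(0,\infty)^n$ under the real-analytic diffeomorphism $\operatorname{Exp}^{-1}=(\log,\dots,\log):(0,\infty)^n\to\R^n$. Pulling back the finitely many defining polynomials $P(y)$ gives the functions $P(e^{x_1},\dots,e^{x_n})$, which are real analytic on $\R^n$. So $\CA_V$ is a finite Boolean combination of analytic equalities and inequalities, globally and hence locally, which makes it semianalytic.

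The main obstacle is the second step: getting a correct, purely polynomial description of the rank drop on the smooth locus. The subtle points are handling the reduced structure, so that $V_{\mathrm{sm}}$ really is $\{\nabla g\neq 0\}$, and verifying the equivalence between non-submersivity of $\Log|_{V_{\mathrm{sm}}}$ and the reality of the logarithmic Gauss map. I would try the Gauss-map minors first. If that derivation becomes delicate, I would fall back on the brute-force Jacobian-minor formulation, which is manifestly polynomial.
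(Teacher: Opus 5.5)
Your proposal is correct and follows essentially the same route as the paper: algebraic coordinates $z_j=r_j(a_j+ib_j)$, a square-free defining polynomial so that smoothness becomes $\nabla g\neq0$, a polynomial description of the critical locus, Tarski--Seidenberg for the projection to the $r$-coordinates, and pulling back the defining polynomials along $\operatorname{Exp}$ to get semianalyticity. Your fallback is exactly the paper's argument: the vanishing of the $(2n+2)\times(2n+2)$ minors of the stacked Jacobian, which uses $\rank d(\operatorname{Re}g,\operatorname{Im}g,a_j^2+b_j^2-1)=n+2$ on the smooth locus. Your preferred Gauss-map minors $\operatorname{Im}(w_j\overline{w_k})=0$, with $w_j=z_jg_{z_j}$, are equally valid, since they are the pointwise content of the logarithmic-Gauss lemma the paper proves in Section 2.
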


\begin{proof}
Recall that a subset $S$ of a real analytic manifold $M$ is semianalytic if every point $p\in M$ has a neighborhood $U$ such that $S\cap U$ is a finite union of sets defined by finitely many real analytic equalities and strict inequalities. Thus, locally, a semianalytic set has the form of a finite union of sets
$
\{x\in U:g_1(x)=\cdots=g_a(x)=0,\ h_1(x)>0,\ldots,h_b(x)>0\},
$
where the functions $g_i$ and $h_j$ are real analytic on $U$. Every semialgebraic subset of a Euclidean space is semianalytic because polynomial functions are real analytic.

We begin by replacing the polar angles by algebraic variables. For $r=(r_1,\ldots,r_n)\in(0,\infty)^n$ and $(u,v)=(u_1,v_1,\ldots,u_n,v_n)\in\R^{2n}$ satisfying $u_j^2+v_j^2=1$, put $z_j=r_j(u_j+iv_j)$. Consider the real algebraic set
$
T=\{(r,u,v)\in(0,\infty)^n\times\R^{2n}:u_j^2+v_j^2=1\text{ for }1\leq j\leq n\}.
$
The map $\Phi:T\to(\C^\ast)^n$ defined by $\Phi(r,u,v)=(r_1(u_1+iv_1),\ldots,r_n(u_n+iv_n))$ is a real analytic diffeomorphism. Its inverse is given by $r_j=|z_j|$, $u_j=\operatorname{Re}(z_j)/|z_j|$, and $v_j=\operatorname{Im}(z_j)/|z_j|$.

Since $f$ is Laurent, there is a vector $N=(N_1,\ldots,N_n)\in\mathbb Z_{\geq0}^n$ such that $P(z)=z_1^{N_1}\cdots z_n^{N_n}f(z)$ is an ordinary polynomial. The monomial used here never vanishes on $(\C^\ast)^n$, so $\{f=0\}$ and $\{P=0\}$ define the same subset of the torus. Write
$
P\bigl(r_1(u_1+iv_1),\ldots,r_n(u_n+iv_n)\bigr)=F_1(r,u,v)+iF_2(r,u,v).
$
The functions $F_1$ and $F_2$ are real polynomials in the $3n$ real variables $r_1,\ldots,r_n,u_1,v_1,\ldots,u_n,v_n$. Consequently, the inverse image $\Phi^{-1}(V)$ is the semialgebraic subset of $T$ defined by $F_1=F_2=0$.

The smoothness condition is also semialgebraic. A point $z\in V$ is smooth on the reduced hypersurface if and only if the differential of a local reduced defining equation does not vanish there. Replacing $P$ by its square-free part does not change the underlying reduced hypersurface, so we may and shall assume that $P$ is reduced. On the torus, a point of $V$ is smooth precisely when the complex gradient $(P_{z_1},\ldots,P_{z_n})$ is not the zero vector. After the substitution $z_j=r_j(u_j+iv_j)$, this condition can be written as
$
\sum_{j=1}^n\left(\operatorname{Re}P_{z_j}\right)^2+
\sum_{j=1}^n\left(\operatorname{Im}P_{z_j}\right)^2>0.
$
The expression on the left becomes a real polynomial in $(r,u,v)$. It follows that the set $X=\Phi^{-1}(V_{\mathrm{sm}})$ is semialgebraic.

We now prove that the critical locus is semialgebraic without choosing local coordinates on $V_{\mathrm{sm}}$. Define $q_j(r,u,v)=u_j^2+v_j^2-1$ and let
$
G=(F_1,F_2,q_1,\ldots,q_n):\R^{3n}\longrightarrow\R^{n+2}.
$
Near every point of $X$, the equations $G=0$ define $X$ as a smooth real submanifold of dimension $2n-2$. Indeed, the equations $q_j=0$ define the smooth manifold $T$, the map $\Phi:T\to(\C^\ast)^n$ is a diffeomorphism, and $F_1=F_2=0$ cut out the smooth complex hypersurface $V_{\mathrm{sm}}$, which has real codimension two. Therefore $dG$ has constant rank $n+2$ at every point of $X$, and $T_pX=\ker dG_p$.

Let $\rho:X\to(0,\infty)^n$ be the restriction of the coordinate projection $\rho(r,u,v)=r$. Since $\Log\circ\Phi=\log\circ\rho$, where $\log:(0,\infty)^n\to\R^n$ is the componentwise logarithm, and since $d\log_r=\operatorname{diag}(1/r_1,\ldots,1/r_n)$ is invertible at every $r\in(0,\infty)^n$, a point $p\in X$ is critical for $\Log\circ\Phi$ if and only if it is critical for $\rho$. Thus
$
\Phi^{-1}\bigl(\Crit(\Log|_{V_{\mathrm{sm}}})\bigr)=\Crit(\rho|_X).
$

We express this last criticality condition by the vanishing of minors. Let $D(G,\rho)_p$ denote the Jacobian matrix of the map $(G,\rho):\R^{3n}\to\R^{2n+2}$ at $p$. Because $\rank dG_p=n+2$, the rank of $d\rho_p|_{T_pX}$ is
$
\rank d\rho_p|_{\ker dG_p}=\rank D(G,\rho)_p-\rank dG_p
=\rank D(G,\rho)_p-(n+2).
$
For completeness, this linear-algebra identity follows by considering the linear map $A=dG_p:E\to\R^{n+2}$ and $B=d\rho_p:E\to\R^n$, where $E=\R^{3n}$. The quotient of the image of $(A,B)$ by the subspace $\operatorname{im}A\times\{0\}$ is naturally isomorphic to $B(\ker A)$, and hence $\rank(A,B)=\rank A+\rank(B|_{\ker A})$.

Since the target of $\rho$ has dimension $n$, the point $p$ is critical for $\rho|_X$ exactly when $\rank d\rho_p|_{T_pX}<n$. By the preceding identity, this is equivalent to $\rank D(G,\rho)_p<2n+2$. The entries of $D(G,\rho)$ are polynomials. Therefore this rank condition is equivalent to the simultaneous vanishing of all $(2n+2)\times(2n+2)$ minors of $D(G,\rho)$. If $2n+2>3n$, which does not occur for $n\geq2$, the rank condition would be automatic; for $n\geq2$ the displayed minors give the required polynomial equations. It follows that
$
\Sigma=\Crit(\rho|_X)
$
is a semialgebraic subset of $\R^{3n}$, because it is defined by the polynomial equations $F_1=F_2=0$ and $q_j=0$, the inequalities $r_j>0$, the polynomial inequality expressing smoothness, and the vanishing of finitely many Jacobian minors.

Let $\pi_r:\R^{3n}\to\R^n$ be the projection onto the $r$-coordinates and put $B=\pi_r(\Sigma)$. The Tarski--Seidenberg theorem asserts that the image of a semialgebraic set under a polynomial map, and in particular under a coordinate projection, is semialgebraic. Hence $B$ is a semialgebraic subset of $\R^n$. Since every point of $\Sigma$ satisfies $r_j>0$, we have $B\subset(0,\infty)^n$.

By construction, $B$ is precisely the coordinatewise modulus image of the logarithmic critical locus:
$
B=\{(|z_1|,\ldots,|z_n|):z\in\Crit(\Log|_{V_{\mathrm{sm}}})\}.
$
Consequently,
$
\mathcal C\mathcal A_V=\log(B),
$
where $\log:(0,\infty)^n\to\R^n$ is the componentwise logarithm. Equivalently, $\operatorname{Exp}(\mathcal C\mathcal A_V)=B$, proving the stronger assertion that the exponential image of the contour is semialgebraic.

It remains to verify carefully that $\log(B)$ is semianalytic. Fix $x^0\in\R^n$ and put $r^0=\operatorname{Exp}(x^0)$. Because $B$ is semialgebraic, there is a neighborhood $U$ of $r^0$ in $(0,\infty)^n$ on which $B\cap U$ is a finite union of sets defined by polynomial equalities and strict polynomial inequalities. Let $W=\log(U)$, which is a neighborhood of $x^0$. If a piece of $B\cap U$ is defined by $p_i(r)=0$ and $q_j(r)>0$, then its image under $\log$ is defined on $W$ by
$
p_i(e^{x_1},\ldots,e^{x_n})=0
\quad\text{and}\quad
q_j(e^{x_1},\ldots,e^{x_n})>0.
$
Each function $x\mapsto p_i(e^{x_1},\ldots,e^{x_n})$ and $x\mapsto q_j(e^{x_1},\ldots,e^{x_n})$ is real analytic on $\R^n$. Therefore $\log(B)\cap W$ is a finite union of basic semianalytic sets. Since $x^0$ was arbitrary, $\log(B)$ is semianalytic in $\R^n$. This proves that $\mathcal C\mathcal A_V$ is semianalytic.
\end{proof}

\begin{remark}
A general proper real analytic image of a semianalytic set is known to be subanalytic, but such an image need not be semianalytic. In the algebraic amoeba setting, semianalyticity follows from the stronger fact that, before applying the logarithm, the modulus image of the critical locus is semialgebraic. The Tarski--Seidenberg theorem is therefore the decisive ingredient of the conclusion from subanalytic to semianalytic.
\end{remark}



\section{Regular and Analytic Amoeba Contours}
There is no purely combinatorial classification, in terms only of the Newton polytope or the degree, of all algebraic varieties whose amoeba contours are analytic. Analyticity is controlled by the local singularity theory of the logarithmic projection on its critical locus. The correct general statement is that the contour is semianalytic, whereas analyticity follows when every local critical branch has a complete real-analytic image and only finitely many such images occur above each contour point. A particularly useful geometric hypothesis is that the critical locus be smooth and that the logarithmic map restrict to an immersion on it. We formulate and prove this criterion in detail.

Let $H=\{f=0\}\subset(\C^\ast)^n$ be a smooth algebraic hypersurface, where $n\geq2$. The logarithmic map is $\Log(z_1,\ldots,z_n)=(\log|z_1|,\ldots,\log|z_n|)$. Its critical locus and contour are
$
C_H=\Crit(\Log|_H)
$
and
$
\mathcal C\mathcal A_H=\Log(C_H).
$
The logarithmic Gauss map is
$
\gamma_H:H\longrightarrow\C\PP^{n-1},\qquad
\gamma_H(z)=[z_1f_{z_1}(z):\cdots:z_nf_{z_n}(z)].
$
The standard logarithmic-Gauss characterization gives
$
C_H=\gamma_H^{-1}(\R\PP^{n-1}).
$
This description is the starting point for the analytic criterion.
   

%

\begin{theorem}\label{thm:1}      
Let $H\subset(\C^\ast)^n$ be a smooth algebraic hypersurface. Assume that $\gamma_H$ is transverse to $\R\PP^{n-1}$. Assume moreover that
$
\rank d(\Log|_{C_H})_p=n-1
$
for every $p\in C_H$. Then $\mathcal C\mathcal A_H$ is a closed real-analytic hypersurface of $\R^n$, possibly with finitely many local branches and self-intersections at a given point.
\end{theorem}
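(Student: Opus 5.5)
The proof has three steps: show that the critical locus is a closed real-analytic $(n-1)$-manifold, use properness of $\Log|_H$ to obtain finiteness of fibres and of local branches, and take a product of local defining functions.

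\emph{Step 1.} Transversality of $\gamma_H$ to $\R\PP^{n-1}$ makes $C_H=\gamma_H^{-1}(\R\PP^{n-1})$ a real-analytic submanifold of $H$. Its real codimension in $H$ equals that of $\R\PP^{n-1}$ in $\C\PP^{n-1}$, namely $n-1$, so $\dim_\R C_H=(2n-2)-(n-1)=n-1$. The set $C_H$ is closed in $H$ because it is the preimage of a closed set. Since $H$ is closed in $(\C^*)^n$, the restriction $\Log|_H$ is proper, as recalled in the introduction. Hence $\Log|_{C_H}:C_H\to\R^n$ is proper, and its image $\CA_H$ is closed in $\R^n$.

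\emph{Step 2.} By hypothesis, $\rank d(\Log|_{C_H})_p=n-1=\dim C_H$ at every point, so $\Log|_{C_H}$ is a real-analytic immersion. Fix $x\in\CA_H$. The fibre $F=C_H\cap\Log^{-1}(x)$ is compact by properness, and it is discrete because immersions are locally injective. Therefore $F=\{p_1,\ldots,p_k\}$ is finite.

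For each $i$, choose a neighbourhood $U_i$ of $p_i$ on which $\Log$ is an embedding. Its image $\Log(U_i)$ is then an embedded real-analytic hypersurface germ at $x$. After shrinking $U_i$ and choosing a small ball $W\ni x$, we can arrange that $\Log(U_i)\cap W$ is closed in $W$ and equals $\{g_i=0\}$ for some real-analytic $g_i$ on $W$ with $dg_i\neq0$. To do this, straighten $\Log(U_i)$ as a graph over its tangent hyperplane and take $g_i$ to be the graph equation.

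\emph{Step 3 (main obstacle).} We must show that no other part of $C_H$ contributes to $\CA_H\cap W$; that is, after shrinking $W$,
\[
\CA_H\cap W=\bigcup_{i=1}^k \Log(U_i)\cap W .
\]
We argue by properness. The set $K=C_H\setminus\bigcup_i U_i$ is closed in $C_H$, so $\Log(K)$ is closed in $\R^n$ because a proper map to a locally compact Hausdorff space is closed. Also $x\notin\Log(K)$, since the whole fibre over $x$ lies in $\bigcup_i U_i$. Hence a small enough $W$ misses $\Log(K)$, which gives the displayed equality.

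It remains to handle the requirement that each $\Log(U_i)\cap W$ is a complete closed germ, not merely a one-sided piece. We get this by choosing $U_i$ as the preimage under the local embedding of a product chart: take $U_i=(\Log|_{U_i'})^{-1}(W)$ for a larger embedded neighbourhood $U_i'$, with $W$ small enough that $\Log(U_i')\cap W$ is a graph over the whole tangent disc.

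Then $\CA_H\cap W=\{g_1\cdots g_k=0\}$, a single real-analytic equation. Since $x$ was arbitrary and $\CA_H$ is closed, $\CA_H$ is a closed real-analytic hypersurface of $\R^n$. Near $x$ it is a union of $k$ smooth branches, which may meet or coincide; this accounts for the self-intersections allowed in the statement.
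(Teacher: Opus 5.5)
Your proposal is correct and follows essentially the same route as the paper. Transversality makes $C_H$ a closed $(n-1)$-manifold, and $\Log|_{C_H}$ is a proper immersion, so each fibre is finite. Properness then yields a neighbourhood $W$ of $x$ whose preimage lies in the chosen embedded pieces, and the contour near $x$ is the zero set of the product $g_1\cdots g_k$. The differences are minor: you obtain $W$ from the closedness of the proper map (with $K$ defined using the larger charts $U_i'$, so that $\Log(K)$ is closed and misses $x$) instead of the paper's sequence argument. You also make explicit, via product charts chosen before $W$, that each $\Log(U_i')\cap W$ is a full closed graph in $W$, which the paper covers by ``shrinking $W$ and the $U_j$ once more.''
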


Before proving the theorem, we explain the meaning and role of its hypotheses. The equality $C_H=\gamma_H^{-1}(\R\PP^{n-1})$ identifies logarithmic criticality with a real condition on the logarithmic normal direction. Transversality of $\gamma_H$ to $\R\PP^{n-1}$ means that, at each $p\in C_H$,
$
d\gamma_H(T_pH)+T_{\gamma_H(p)}\R\PP^{n-1}
=T_{\gamma_H(p)}\C\PP^{n-1}.
$
This prevents the inverse image from having an unexpected singularity or an excessive dimension. It implies that $C_H$ is a smooth real-analytic manifold of dimension $n-1$. The second hypothesis says that $\Log$ does not ramify when restricted to this $(n-1)$-dimensional manifold. Since the rank equals the dimension of the source, $\Log|_{C_H}$ is an immersion. Its local images are therefore embedded real-analytic hypersurfaces. Properness of the logarithmic map then guarantees that only finitely many such local images occur above a given contour point and that no distant branches enter after the target neighborhood is made smaller.

\begin{lemma}[Mikhalkin \cite{Mikhalkin04, Mikhalkin04Pants}]
For every $p\in H$,
$
p\in C_H
$
if and only if
$
\gamma_H(p)\in\R\PP^{n-1}.
$
Consequently,
$
C_H=\gamma_H^{-1}(\R\PP^{n-1}).
$
\end{lemma}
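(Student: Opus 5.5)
We prove the lemma by a linear-algebra computation at a single point $p\in H$, in logarithmic coordinates. Since $\Log\circ\exp$ is the real part, we pass to the universal cover $w=(w_1,\ldots,w_n)\in\C^n$ with $z_j=e^{w_j}$. In these coordinates $\Log$ becomes $w\mapsto\operatorname{Re}w$. The lift $\tilde H=\{w:f(e^{w})=0\}$ is a smooth complex hypersurface, and the covering map is a local biholomorphism. Hence $p$ is critical for $\Log|_H$ exactly when a lift $\tilde p$ is critical for $\operatorname{Re}|_{\tilde H}$.

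At $\tilde p$, the chain rule gives $\partial_{w_j}f(e^w)=z_jf_{z_j}(z)$. Thus the complex tangent space is
\[
L=T_{\tilde p}\tilde H=\Bigl\{\xi\in\C^n:\textstyle\sum_j a_j\xi_j=0\Bigr\},\qquad a=(z_1f_{z_1}(p),\ldots,z_nf_{z_n}(p)),
\]
with $a\neq0$ because $H$ is smooth. Note that $[a]=\gamma_H(p)$. The differential of $\Log|_H$ at $p$ is identified with the real-linear map $\operatorname{Re}:L\to\R^n$. We must show that this map fails to be surjective if and only if $a$ is proportional to a real vector.

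Both directions go through the annihilator of the image. The image $\operatorname{Re}(L)$ is a proper subspace if and only if there is a nonzero $c\in\R^n$ with $\sum_j c_j\operatorname{Re}\xi_j=0$ for all $\xi\in L$. Since $L$ is complex, replacing $\xi$ by $i\xi$ shows that this holds if and only if $\sum_jc_j\xi_j=0$ on $L$, which includes the imaginary parts as well. In other words, the complex linear functional $\xi\mapsto\sum_j c_j\xi_j$ vanishes on the hyperplane $L=\ker a$. That happens exactly when $c=\lambda a$ for some $\lambda\in\C^*$, that is, when $[a]=[c]\in\R\PP^{n-1}$. Conversely, if $[a]$ is real, choose $\lambda$ with $c=\lambda a\in\R^n\setminus\{0\}$. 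Then $\operatorname{Re}(L)\subset c^{\perp}$, so $p$ is critical.

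This establishes $C_H=\gamma_H^{-1}(\R\PP^{n-1})$. The only delicate point is the step exploiting complex linearity of $L$ to pass from real to complex annihilation, and it is handled by the substitution $\xi\mapsto i\xi$.
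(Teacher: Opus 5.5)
Your proof is correct and follows essentially the same route as the paper. The paper uses local holomorphic logarithms on a simply connected neighborhood instead of the universal cover $\exp:\C^n\to(\C^\ast)^n$, but otherwise the argument is identical: it identifies $T_pH$ with $\ker a$, where $a_j=p_jf_{z_j}(p)$, uses $v\mapsto iv$ to turn a real covector annihilating $\operatorname{Re}(T_pH)$ into a complex functional proportional to $a$, and proves the converse by taking real parts.
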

\begin{proof}
Fix $p=(p_1,\ldots,p_n)\in H$. Choose a simply connected neighborhood of $p$ in $(\C^\ast)^n$ on which holomorphic logarithms are defined, and introduce local logarithmic coordinates $\zeta_j=\log z_j$. In these coordinates, the differential of the ambient logarithmic map is
$
d\Log_p(v_1,\ldots,v_n)=(\operatorname{Re}v_1,\ldots,\operatorname{Re}v_n).
$
Indeed, if $z_j(s)=p_je^{sv_j}$, then the derivative of $\log|z_j(s)|$ at $s=0$ is $\operatorname{Re}v_j$.

Put
$
a_j=p_jf_{z_j}(p)
$
and
$
a=(a_1,\ldots,a_n)\in\C^n.
$
In logarithmic coordinates, the complex tangent space of $H$ at $p$ is the complex hyperplane
$
T_pH=\{v\in\C^n:a_1v_1+\cdots+a_nv_n=0\}.
$
This follows from the chain rule, since $\partial f/\partial\zeta_j=z_jf_{z_j}$. The vector $a$ is nonzero because $H$ is smooth.

The point $p$ is critical for $\Log|_H$ precisely when the real-linear map $d(\Log|_H)_p:T_pH\to\R^n$ is not surjective. This is equivalent to the existence of a nonzero real covector $\xi=(\xi_1,\ldots,\xi_n)\in\R^n$ that annihilates its image. Hence $p$ is critical precisely when there exists $\xi\neq0$ such that
$
\xi_1\operatorname{Re}v_1+\cdots+\xi_n\operatorname{Re}v_n=0
$
for every $v\in T_pH$.
Since the numbers $\xi_j$ are real, the left-hand side is $\operatorname{Re}(\xi_1v_1+\cdots+\xi_nv_n)$. Moreover, $T_pH$ is a complex vector space. Thus, whenever $v\in T_pH$, one also has $iv\in T_pH$. Applying the annihilation condition first to $v$ and then to $iv$ gives
$
\operatorname{Re}\left(\sum_{j=1}^n\xi_jv_j\right)=0
$
and
$
\operatorname{Re}\left(i\sum_{j=1}^n\xi_jv_j\right)
=-\operatorname{Im}\left(\sum_{j=1}^n\xi_jv_j\right)=0.
$
Therefore
$
\sum_{j=1}^n\xi_jv_j=0
$
for every $v\in T_pH$. The complex covector $\xi$ and the nonzero complex covector $a$ have the same kernel, namely the complex hyperplane $T_pH$. Hence they are proportional over $\C$: there is a $\lambda\in\C^\ast$ such that $\xi=\lambda a$. Since $\xi$ has real coordinates, the projective class $[a_1:\cdots:a_n]$ belongs to $\R\PP^{n-1}$. But this projective class is exactly $\gamma_H(p)$.

Conversely, assume that $\gamma_H(p)\in\R\PP^{n-1}$. Then there are a nonzero vector $\xi\in\R^n$ and a scalar $\lambda\in\C^\ast$ such that $\xi=\lambda a$. Every $v\in T_pH$ satisfies $a\cdot v=0$, and therefore $\xi\cdot v=0$. Taking real parts gives $\xi\cdot d\Log_p(v)=0$. Hence the image of $d(\Log|_H)_p$ is contained in the proper real hyperplane $\xi^\perp\subset\R^n$, so the differential is not surjective. Thus $p\in C_H$.
\end{proof}

\begin{lemma}%
If $\gamma_H$ is transverse to $\R\PP^{n-1}$, then $C_H$ is a closed real-analytic submanifold of $H$ of real dimension $n-1$.
\end{lemma}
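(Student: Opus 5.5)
The plan is to apply the preimage (transversality) theorem to $\gamma_H$ and $\R\PP^{n-1}$, using the previous lemma to identify $C_H$ with $\gamma_H^{-1}(\R\PP^{n-1})$. Closedness, real-analyticity and dimension are then checked separately.

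\emph{Closedness and analyticity of the data.} Since $H$ is smooth, the vector $(z_1f_{z_1},\ldots,z_nf_{z_n})$ never vanishes on $H$. Hence $\gamma_H:H\to\C\PP^{n-1}$ is a well-defined holomorphic map, in particular real-analytic. The set $\R\PP^{n-1}$ is a closed real-analytic (indeed real-algebraic) submanifold of $\C\PP^{n-1}$, being the fixed locus of complex conjugation. Therefore $C_H=\gamma_H^{-1}(\R\PP^{n-1})$ is closed in $H$, as the preimage of a closed set under a continuous map.

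\emph{Local submanifold structure.} Fix $p\in C_H$ and put $y=\gamma_H(p)$. Choose real-analytic functions $g_1,\ldots,g_{n-1}$ on a neighborhood $W$ of $y$ with independent differentials that cut out $\R\PP^{n-1}\cap W$; such functions exist because $\R\PP^{n-1}$ has real codimension $n-1$ in $\C\PP^{n-1}$. For example, in an affine chart $w_k=1$ one can take $g_j=\operatorname{Im}w_j$ for $j\neq k$. Then $C_H$ is cut out near $p$ by $g_j\circ\gamma_H=0$. The differential of $(g\circ\gamma_H)$ at $p$ is $dg_y\circ d\gamma_H(p)$. The map $dg_y$ is surjective with kernel $T_y\R\PP^{n-1}$, and the transversality condition $d\gamma_H(T_pH)+T_y\R\PP^{n-1}=T_y\C\PP^{n-1}$ makes the composite surjective onto $\R^{n-1}$. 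The real-analytic implicit function theorem then shows that $C_H$ is, near $p$, a real-analytic submanifold of real codimension $n-1$ in $H$. Its dimension is therefore $(2n-2)-(n-1)=n-1$.

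\emph{Main obstacle.} The only delicate point is making sure the implicit function theorem is applied in the analytic category and that the codimension count is correct. Specifically, $\R\PP^{n-1}$ has real dimension $n-1$ inside the $(2n-2)$-dimensional manifold $\C\PP^{n-1}$, so its codimension is $n-1$, and under transversality the preimage inherits exactly this codimension. A secondary issue is that $C_H$ might be empty, in which case the statement holds vacuously.
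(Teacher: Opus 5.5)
Your proposal is correct and follows essentially the same route as the paper: identify $C_H$ with $\gamma_H^{-1}(\R\PP^{n-1})$ via the preceding lemma, get closedness from the continuity of $\gamma_H$, and apply the real-analytic transverse-preimage theorem with the codimension count $(2n-2)-(n-1)=n-1$. The only difference is that the paper simply cites that theorem, whereas you prove it directly, using the local defining functions $\operatorname{Im}(w_j/w_k)$ for $\R\PP^{n-1}$ and the analytic implicit function theorem.
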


\begin{proof}
The complex manifold $H$ has complex dimension $n-1$ and real dimension $2n-2$. The complex projective space $\C\PP^{n-1}$ also has real dimension $2n-2$, while $\R\PP^{n-1}$ has real dimension $n-1$. Therefore the real codimension of $\R\PP^{n-1}$ in $\C\PP^{n-1}$ is
$
(2n-2)-(n-1)=n-1.
$

The logarithmic Gauss map is real analytic. By the real-analytic transverse-preimage theorem, the inverse image of a real-analytic submanifold under a transverse real-analytic map is a real-analytic submanifold whose codimension equals the codimension of the target submanifold. Using the preceding lemma, we obtain
$
C_H=\gamma_H^{-1}(\R\PP^{n-1}),
$
and hence
$
\dim_\R C_H
=\dim_\R H-\operatorname{codim}_\R(\R\PP^{n-1},\C\PP^{n-1})
=(2n-2)-(n-1)=n-1.
$
The subset $\R\PP^{n-1}$ is closed in $\C\PP^{n-1}$. Since $\gamma_H$ is continuous, its inverse image $C_H$ is closed in $H$.
\end{proof}

\begin{lemma} %
The restriction $\Log|_H:H\to\R^n$ is proper. Consequently, its restriction
$
F=\Log|_{C_H}:C_H\longrightarrow\R^n
$
is proper.
\end{lemma}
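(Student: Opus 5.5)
The plan is to reduce everything to the properness of the ambient map $\Log:(\C^\ast)^n\to\R^n$, and then use two general facts. The restriction of a proper map to a closed subset is proper. Closedness passes from $C_H\subset H$ and $H\subset(\C^\ast)^n$ to $C_H\subset(\C^\ast)^n$. Throughout, ``proper'' means that preimages of compact sets are compact. All spaces involved are locally compact Hausdorff, so this notion is unambiguous.

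\emph{Step 1: $\Log$ is proper on the torus.} Let $K\subset\R^n$ be compact, and choose $R>0$ with $K\subset[-R,R]^n$. Then
$$\Log^{-1}(K)\subset A_R=\{z\in(\C^\ast)^n: e^{-R}\le |z_j|\le e^{R},\ 1\le j\le n\},$$
and $A_R$ is a product of closed annuli, hence compact in $\C^n$. Because $A_R$ avoids the coordinate hyperplanes, it is also compact as a subset of $(\C^\ast)^n$. Since $\Log$ is continuous, $\Log^{-1}(K)$ is closed in $(\C^\ast)^n$. It is therefore a closed subset of the compact set $A_R$, and so it is compact.

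\emph{Step 2: restriction to $H$.} The hypersurface $H=\{f=0\}$ is the zero set of a continuous function on $(\C^\ast)^n$, so it is closed in the torus. Hence $(\Log|_H)^{-1}(K)=H\cap\Log^{-1}(K)$ is a closed subset of a compact set, and is compact. This proves that $\Log|_H$ is proper.

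\emph{Step 3: restriction to $C_H$.} By Mikhalkin's lemma, $C_H=\gamma_H^{-1}(\R\PP^{n-1})$. Here $\gamma_H$ is continuous on $H$ (its homogeneous coordinates never vanish simultaneously, since $H$ is smooth), and $\R\PP^{n-1}$ is closed in $\C\PP^{n-1}$. So $C_H$ is closed in $H$, without needing transversality. It follows that $F^{-1}(K)=C_H\cap(\Log|_H)^{-1}(K)$ is compact, and $F$ is proper.

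The only point requiring care is that $H$ is closed in $(\C^\ast)^n$ but generally not in $\C^n$. The argument must therefore use that $A_R$ stays at positive distance from the coordinate hyperplanes, so that compactness in $\C^n$ really is compactness inside the torus. Everything else is routine point-set topology.
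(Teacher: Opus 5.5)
Your proposal is correct and follows essentially the same route as the paper: you trap $\Log^{-1}(K)$ in a compact product of closed annuli $A_R\subset(\C^\ast)^n$, use that $H$ is closed in the torus, and then pass to $C_H$ as a closed subset of $H$. Your only addition is to justify explicitly that $C_H=\gamma_H^{-1}(\R\PP^{n-1})$ is closed without using transversality, whereas the paper takes this from the continuity argument in the preceding lemma.
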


\begin{proof}
Let $K\subset\R^n$ be compact. There is an $M>0$ such that $|x_j|\leq M$ for all $x=(x_1,\ldots,x_n)\in K$ and all $j$. If $z\in\Log^{-1}(K)$, then
$
e^{-M}\leq|z_j|\leq e^M
$
for every $j$. Thus $\Log^{-1}(K)$ is contained in the compact product of closed annuli
$
A_M=\{z\in\C^n:e^{-M}\leq|z_j|\leq e^M\text{ for every }j\}.
$
The set $H\cap\Log^{-1}(K)$ is closed in $A_M$. Indeed, $H$ is the zero set of a Laurent polynomial in the torus, and all coordinates on $A_M$ are bounded away from zero, so $H\cap A_M$ is closed in $A_M$. The set $\Log^{-1}(K)$ is also closed by continuity. Therefore
$
(\Log|_H)^{-1}(K)=H\cap\Log^{-1}(K)
$
is compact. This proves that $\Log|_H$ is proper.
Since $C_H$ is closed in $H$, the set
$
F^{-1}(K)=C_H\cap(\Log|_H)^{-1}(K)
$
is closed in a compact set and hence compact. Thus $F$ is proper.
\end{proof}

\begin{proof}[Proof of  Theorem~\ref{thm:1}]
By the logarithmic-Gauss characterization and the transversality hypothesis, $C_H$ is a closed real-analytic manifold of dimension $n-1$. Define
$
F=\Log|_{C_H}:C_H\to\R^n.
$
The hypothesis $\rank dF_p=n-1$ for every $p\in C_H$ says that the differential has rank equal to the dimension of its source. Therefore $F$ is a real-analytic immersion.

Fix a contour point $x\in\mathcal C\mathcal A_H$. By definition, $F^{-1}(x)$ is nonempty. Properness implies that this fiber is compact. Since $F$ is an immersion, the real-analytic constant-rank theorem implies that $F$ is locally an embedding near every point of $C_H$. In particular, every point of $F^{-1}(x)$ is isolated in that fiber. A compact discrete subset of a manifold is finite. To see this directly, an infinite subset of a compact metric space has an accumulation point, while a discrete subset has no accumulation point. Hence
$
F^{-1}(x)=\{p_1,\ldots,p_r\}
$
for some finite integer $r\geq1$.

For each $p_j$, the constant-rank theorem provides a neighborhood $U_j$ of $p_j$ in $C_H$ and real-analytic coordinates $(t_1,\ldots,t_{n-1})$ on $U_j$ and $(y_1,\ldots,y_n)$ near $x$ such that
$
F(t_1,\ldots,t_{n-1})
=(t_1,\ldots,t_{n-1},0).
$
After shrinking the $U_j$, they may be assumed pairwise disjoint, and every restriction $F|_{U_j}$ is a real-analytic embedding. Its image
$
S_j=F(U_j)
$
is therefore an embedded real-analytic hypersurface germ through $x$.

We now prove that no additional part of $C_H$ can map arbitrarily close to $x$. There is an open neighborhood $W$ of $x$ such that
$
F^{-1}(W)\subset U_1\cup\cdots\cup U_r.
$
Suppose this were false. Choose a decreasing sequence of relatively compact neighborhoods $W_\nu$ of $x$ with intersection $\{x\}$. For every $\nu$, there would be a point
$
q_\nu\in F^{-1}(W_\nu)\setminus(U_1\cup\cdots\cup U_r).
$
Then $F(q_\nu)\to x$. The set consisting of $x$ and the points $F(q_\nu)$ has compact closure $K$. Properness makes $F^{-1}(K)$ compact, so a subsequence of $q_\nu$ converges to a point $q\in C_H$. Continuity gives $F(q)=x$, and therefore $q=p_j$ for some $j$. Since $U_j$ is a neighborhood of $p_j$, the convergent subsequence eventually lies in $U_j$, contradicting its construction. The asserted neighborhood $W$ therefore exists.

After shrinking $W$ and the $U_j$ once more, we have
$
\mathcal C\mathcal A_H\cap W
=F(C_H)\cap W
=\bigcup_{j=1}^r(F(U_j)\cap W)
=\bigcup_{j=1}^r(S_j\cap W).
$
Each $S_j\cap W$ is an embedded real-analytic hypersurface. Hence there exists a real-analytic function $g_j:W\to\R$ such that
$
S_j\cap W=\{y\in W:g_j(y)=0\}
$
and $dg_j$ does not vanish along $S_j\cap W$. The union of these branches is the zero set of the product:
$
\bigcup_{j=1}^r(S_j\cap W)
=\{y\in W:g_1(y)\cdots g_r(y)=0\}.
$
The product $g_1\cdots g_r$ is real analytic. Thus $\mathcal C\mathcal A_H$ is a real-analytic hypersurface near $x$.

This conclusion remains valid when two or more image branches coincide or meet each other. Coincident branches only repeat a factor and may be removed. Distinct branches may intersect transversally or tangentially, but their union is still defined by the product of their local equations. Such an intersection is generally a singular point of the contour as an analytic hypersurface, although each individual branch is smooth. The number of local branches is finite because the fiber $F^{-1}(x)$ is finite.
Since $x$ was arbitrary, $\mathcal C\mathcal A_H$ is a real-analytic hypersurface of $\R^n$. It remains to prove closedness. Let $x_\nu\in\mathcal C\mathcal A_H$ converge to $x\in\R^n$. Choose $p_\nu\in C_H$ with $F(p_\nu)=x_\nu$. The set consisting of $x$ and the sequence $x_\nu$ has compact closure $K$. Properness implies that $F^{-1}(K)$ is compact, so a subsequence $p_{\nu_k}$ converges to some $p\in C_H$. Continuity gives $F(p)=x$. Hence $x\in F(C_H)=\mathcal C\mathcal A_H$. Therefore the contour is closed.
\end{proof}


\section{Are the transversality and immersion hypotheses necessary?}

Let $H=\{f=0\}\subset(\C^\ast)^n$ be a smooth algebraic hypersurface, let
$
\gamma_H(z)=[z_1f_{z_1}(z):\cdots:z_nf_{z_n}(z)]
$
be its logarithmic Gauss map, and put
$
C_H=\Crit(\Log|_H)=\gamma_H^{-1}(\R\PP^{n-1}).
$
The immersed-critical-locus theorem assumes that $\gamma_H$ is transverse to $\R\PP^{n-1}$ and that $\rank d(\Log|_{C_H})_p=n-1$ at every $p\in C_H$. These hypotheses imply that $C_H$ is a smooth $(n-1)$-dimensional real-analytic manifold and that its logarithmic image is locally a finite union of immersed analytic hypersurfaces. They are strong, convenient sufficient hypotheses. They are not necessary for the contour to be a closed real-analytic hypersurface.

\begin{theorem}
Neither the condition that $\gamma_H$ be transverse to $\R\PP^{n-1}$ nor the condition\\
$
\rank d(\Log|_{C_H})_p=n-1
$
at every $p\in C_H$ is necessary for $\mathcal C\mathcal A_H$ to be a closed real-analytic hypersurface. The first condition can fail because the logarithmic Gauss map is excessively degenerate while its image contour remains an affine hyperplane. The second can fail because a ramified critical branch may have a complete real-analytic image, for example an ordinary cusp. What is necessary is a condition on the image germs, not maximal rank of every parametrizing branch.
\end{theorem}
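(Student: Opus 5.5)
The plan is to prove the theorem by exhibiting two explicit situations, one for each hypothesis. In each, the hypothesis in question fails while $\mathcal C\mathcal A_H$ is still a closed real-analytic hypersurface. The closedness part will always come from the properness lemma for $\Log|_H$, exactly as at the end of the proof of Theorem~\ref{thm:1}. Since $\mathcal C\mathcal A_H$ is the image of the closed set $C_H$ under a proper map, it is closed. So only local analyticity of the image has to be checked.

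\emph{Failure of transversality.} I would take a binomial hypersurface, the simplest being $H=\{1+z_1=0\}\subset(\C^\ast)^n$, or more generally $\{z^m=c\}$. Here $z_1f_{z_1}=z_1$ and $z_jf_{z_j}=0$ for $j\geq2$. Hence $\gamma_H\equiv[1:0:\cdots:0]$ is constant and lies in $\R\PP^{n-1}$. By Mikhalkin's lemma, $C_H=\gamma_H^{-1}(\R\PP^{n-1})=H$, which has real dimension $2n-2>n-1$. Transversality fails: $d\gamma_H=0$ and $\R\PP^{n-1}$ has positive codimension in $\C\PP^{n-1}$, so the sum in the transversality condition is too small. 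On the other hand, $\Log(H)=\{x_1=0\}$ is an affine hyperplane, the zero set of the single linear function $x_1$. The same example also shows that the rank hypothesis is violated at the level of $C_H$, because $C_H$ is not even $(n-1)$-dimensional. So this example alone already shows that the pair of hypotheses is not necessary. A separate example is still needed to show that the rank condition is not necessary on its own.

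\emph{Failure of maximal rank with a smooth critical curve.} Work with $n=2$; the case $n>2$ follows by taking products $H\times(\C^\ast)^{n-2}$, whose contour is $\mathcal C\mathcal A_H\times\R^{n-2}$. For $n=2$, at a point $p\in C_H$ the kernel of $d(\Log|_H)_p$ is a real line in $T_pH$. The rank of $F=\Log|_{C_H}$ drops to $0$ exactly where the tangent line of the curve $C_H$ lies in this kernel. At such a point I would use the local analytic normal form of $F$ along the curve: in a local parameter $t$ on $C_H$ and suitable affine coordinates on $\R^2$, one has $F(t)=(at^2+O(t^3),\,bt^3+O(t^4))$ with $ab\neq0$ in the generic case. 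Its image is a full ordinary cusp $\{y^2=\kappa x^3+\cdots\}$, which is a real-analytic germ. Because $t$ and $-t$ both hit the two halves of the cusp, there is no incomplete one-sided image. Away from the finitely many ramification points over a compact set, $F$ is an immersion. The local argument of the proof of Theorem~\ref{thm:1} then applies verbatim, with the fiber being finite by properness. The only modification is that the local equation $g_j$ of the cusp branch is now the analytic equation of the cusp rather than that of a smooth hypersurface. The product $g_1\cdots g_r$ then gives a single analytic equation for the contour near every point.

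The main obstacle is producing a concrete algebraic curve that realizes the generic cusp while $\gamma_H$ remains transverse to $\R\PP^1$ elsewhere. First I would try a one-parameter family such as $1+z_1+z_2+az_1z_2$ with $a\in\C\setminus\R$, or Mikhalkin's examples of curves whose contours have cusps, and verify the expansion $(t^2,t^3)$ at the cusp point. If an explicit verification proves too heavy, the fallback is the existence argument: cusps are the generic codimension-one degeneration of $F$, and contours with cusps are known to occur. The statement only asks for an example, so existence suffices. Either way, the conclusion is that analyticity is a condition on the image germs, not on maximal rank of every parametrizing branch.
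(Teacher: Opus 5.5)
Your proposal is correct and follows the paper's route. Your $\{1+z_1=0\}$ is the case $\alpha=e_1$ of the paper's binomial $c_0+c_1z^\alpha=0$: the logarithmic Gauss map is a constant real point while the contour is an affine hyperplane. Then a complete cusp $(t^2,t^3)$ on the contour of a smooth curve serves for the rank condition. You and the paper both take the existence of such cusps from the literature rather than from an explicit example. Your product $H\times(\C^\ast)^{n-2}$ extending this to $n>2$ is a small addition the paper does not make; note that it loses transversality there.

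One aside is inaccurate but unused. For $\{1+z_1=0\}$ one has $\Log|_{C_H}=\Log|_H=(0,\log|z_2|,\ldots,\log|z_n|)$, which has rank exactly $n-1$ everywhere. So the literal rank condition holds, and only the immersion property (rank equal to $\dim C_H$) fails.
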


\begin{proof}
We first disprove necessity of transversality by an explicit family. Let $\alpha=(\alpha_1,\ldots,\alpha_n)\in\Z^n$ be primitive, let $c_0,c_1\in\C^\ast$, and consider the binomial hypersurface
$
H_{\alpha,c}=\{z\in(\C^\ast)^n:c_0+c_1z^\alpha=0\},
$
where $z^\alpha=z_1^{\alpha_1}\cdots z_n^{\alpha_n}$. Since $\alpha$ is primitive, the character $\chi_\alpha:(\C^\ast)^n\to\C^\ast$, $\chi_\alpha(z)=z^\alpha$, has connected kernel. Hence $H_{\alpha,c}$ is a translate of a connected codimension-one algebraic subtorus and is irreducible.

The hypersurface is smooth. Indeed, for $f(z)=c_0+c_1z^\alpha$, one has
$
z_jf_{z_j}(z)=c_1\alpha_jz^\alpha.
$
Since $\alpha\neq0$ and $z^\alpha\neq0$, these logarithmic derivatives do not vanish simultaneously. On $H_{\alpha,c}$, the equality $c_1z^\alpha=-c_0$ gives
$
\gamma_H(z)
=[\alpha_1:\cdots:\alpha_n].
$
Thus the logarithmic Gauss map is constant, and its value belongs to $\R\PP^{n-1}$.

A constant map whose value lies in $\R\PP^{n-1}$ is not transverse to $\R\PP^{n-1}$. Indeed, $d\gamma_H=0$, so at every point $p\in H$,
$
d\gamma_H(T_pH)+T_{\gamma_H(p)}\R\PP^{n-1}
=T_{\gamma_H(p)}\R\PP^{n-1},
$
which is a proper subspace of $T_{\gamma_H(p)}\C\PP^{n-1}$. Therefore the transversality condition fails everywhere.

Nevertheless, every point of $H_{\alpha,c}$ is logarithmically critical because $\gamma_H(H)\subset\R\PP^{n-1}$. Thus $C_H=H$. Taking absolute values in $z^\alpha=-c_0/c_1$ and then logarithms gives
$
\langle\alpha,\Log z\rangle=\log|c_0/c_1|.
$
Conversely, every point $x\in\R^n$ satisfying this affine equation is realized by a point of $H_{\alpha,c}$: choose positive moduli $|z_j|=e^{x_j}$ and then choose phases whose character has argument $\arg(-c_0/c_1)$. Consequently,
$
\mathcal C\mathcal A_H=\mathcal A_H
=\{x\in\R^n:\langle\alpha,x\rangle=\log|c_0/c_1|\}.
$
This is a closed affine real-analytic hyperplane. Hence transversality of $\gamma_H$ to $\R\PP^{n-1}$ is not necessary.

We now consider the rank condition. Analyticity of an image does not require the parametrizing map to be an immersion. The elementary model is
$
\varphi:\R\to\R^2,\qquad \varphi(t)=(t^2,t^3).
$
At $t=0$, one has $d\varphi_0=0$, so the rank is zero. Nevertheless,
$
\varphi(\R)=\{(x,y)\in\R^2:y^2=x^3,\ x\geq0\}.
$
The inequality is actually redundant on the zero set $y^2=x^3$, because $x^3=y^2\geq0$ implies $x\geq0$. Therefore
$
\varphi(\R)=\{(x,y)\in\R^2:y^2-x^3=0\},
$
which is a real-analytic curve germ. Thus failure of immersion may produce an analytic cusp rather than a nonanalytic image.

This phenomenon occurs for amoeba contours of smooth algebraic curves. If $C\subset(\C^\ast)^2$ is smooth and its logarithmic critical locus $C_C$ is smooth, the points at which $\Log|_{C_C}$ is not an immersion form the logarithmic ramification set
$
\Sigma_C=\{p\in C_C:d(\Log|_{C_C})_p=0\}.
$
The images of such points are contour cusps; this is the local situation studied in \cite{Lang19,LangShapiroShustin21}. A complete ordinary cusp has a local real-analytic parametrization
$
t\longmapsto(t^pu(t),t^qv(t)),
$
where $u(0)v(0)\neq0$ and the primitive exponents produce a complete two-sided analytic image germ. At the preimage point, the differential vanishes, so the rank condition required by the immersed-critical-locus theorem fails. The image can nonetheless be the zero set of a convergent real-analytic equation and is therefore analytic. When all remaining contour branches are immersed and only finitely many complete cusp branches occur, the whole local contour is a finite union of analytic germs and hence is analytic. Thus maximal rank at every critical point is not necessary for analyticity of the contour.

The essential distinction is completeness. Compare the analytic cusp $t\mapsto(t^2,t^3)$ with the even fold $t\mapsto(t^2,0)$. The latter has image $\{(x,0):x\geq0\}$, which is not an analytic subset near the origin. If a real-analytic function vanishes on this half-axis, its restriction to the $x$-axis vanishes for $x\geq0$ and hence, by the one-variable identity theorem, vanishes for negative $x$ as well. Therefore no family of analytic equations can cut out only the half-axis. The cusp contains its complete analytic germ, whereas the fold contains only one half of its analytic continuation. Consequently, a rank drop is compatible with analyticity when the resulting image germ is complete, but it can destroy analyticity when it creates a one-sided branch.
\end{proof}

The preceding proof also shows that the two hypotheses play different roles. Transversality controls the structure of the source $C_H$. It ensures that $C_H=\gamma_H^{-1}(\R\PP^{n-1})$ is a smooth real-analytic manifold of dimension $n-1$. Analyticity of the contour, however, is a property of the image $\Log(C_H)$. A singular or higher-dimensional critical locus can still have a simple analytic image, as the binomial example demonstrates. Hence regularity of the source is not logically forced by regularity of the image.

The rank hypothesis controls the parametrization of each image branch. If $\rank d(\Log|_{C_H})=n-1$, the constant-rank theorem immediately makes the image branch a smooth embedded analytic hypersurface. If the rank drops, the constant-rank theorem no longer applies, but the image may still be analytic after eliminating the parameter. The cusp equation $y^2-x^3=0$ is the simplest illustration. Hence regularity of a parametrization is stronger than analyticity of its image.

\begin{proposition}  
For every smooth algebraic hypersurface $H\subset(\C^\ast)^n$, the contour $\mathcal C\mathcal A_H$ is closed, independently of both hypotheses.
\end{proposition}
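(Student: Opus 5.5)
The argument is the closedness step at the end of the proof of Theorem~\ref{thm:1}, with the transversality and rank hypotheses removed; that step used neither. It needs two facts that hold for every smooth algebraic hypersurface $H$.

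First, $C_H$ is closed in $H$ regardless of transversality. By Mikhalkin's lemma, $C_H=\gamma_H^{-1}(\R\PP^{n-1})$. The logarithmic Gauss map $\gamma_H$ is continuous on $H$, since $H$ is smooth and the vector $(z_1f_{z_1},\ldots,z_nf_{z_n})$ never vanishes on it. Because $\R\PP^{n-1}$ is closed in $\C\PP^{n-1}$, its preimage $C_H$ is closed in $H$. Equivalently, criticality of $\Log|_H$ is the condition that the rank of $d(\Log|_H)_p$ be less than $n$. This is a closed condition, cut out by the vanishing of minors that depend continuously on $p$.

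Second, $\Log|_H$ is proper. This is the properness lemma already proved, which used only that $H$ is the zero set of a Laurent polynomial in the torus. Combining the two facts, $F=\Log|_{C_H}$ is proper: for compact $K$, the set $F^{-1}(K)=C_H\cap(\Log|_H)^{-1}(K)$ is a closed subset of a compact set.

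Now take $x_\nu\in\mathcal C\mathcal A_H$ with $x_\nu\to x$, and choose $p_\nu\in C_H$ with $\Log(p_\nu)=x_\nu$. Let $K=\{x\}\cup\{x_\nu\}$, which is compact. Then every $p_\nu$ lies in the compact set $F^{-1}(K)$, so a subsequence converges to some $p$. Since $C_H$ is closed in $H$ and $H$ is closed in $(\C^\ast)^n$, the limit $p$ lies in $C_H$. By continuity $\Log(p)=x$, so $x\in\mathcal C\mathcal A_H$. Hence the contour is closed; the binomial example of the previous theorem, where $C_H=H$, is consistent with this.

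The only point that needs care is the closedness of $C_H$ in the absence of transversality: that set may be singular or of excess dimension. However, only topological closedness is used, never manifold structure, and the preimage description gives it directly. Properness of the ambient map then carries the whole argument.
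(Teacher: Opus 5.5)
Your proof is correct and follows the paper's argument. Both show that $C_H=\gamma_H^{-1}(\R\PP^{n-1})$ is closed in $H$, that $\Log|_H$ and hence $\Log|_{C_H}$ is proper, and that the image is therefore closed; the only difference is that you verify the last step with the explicit sequential-compactness argument from the end of the proof of Theorem~\ref{thm:1}, whereas the paper cites the general fact that proper continuous maps between locally compact Hausdorff spaces are closed.
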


\begin{proof}
The critical locus $C_H=\gamma_H^{-1}(\R\PP^{n-1})$ is closed in $H$ because $\R\PP^{n-1}$ is closed in $\C\PP^{n-1}$ and $\gamma_H$ is continuous. The map $\Log|_H:H\to\R^n$ is proper. Indeed, if $K\subset\R^n$ is compact, then there is an $M>0$ such that every point of $\Log^{-1}(K)$ satisfies $e^{-M}\leq|z_j|\leq e^M$. Thus $H\cap\Log^{-1}(K)$ is a closed subset of a compact product of annuli and is compact. The restriction $\Log|_{C_H}$ is therefore proper. A proper continuous map between locally compact Hausdorff spaces is closed, so $\Log(C_H)$ is closed.
\end{proof}

Thus neither transversality nor maximal rank is needed for the word ``closed.'' Their purpose in the immersed-critical-locus theorem is to prove local analyticity by producing finitely many smooth immersed branches.

\begin{criterion}%
The contour $\mathcal C\mathcal A_H$ is a real-analytic hypersurface near $x\in\mathcal C\mathcal A_H$ if the logarithmic critical locus above a neighborhood of $x$ has finitely many relevant real-analytic branches $B_1,\ldots,B_r$, each image germ $\Log(B_j)$ is a complete real-analytic hypersurface germ, and these germs account for every contour point near $x$. Immersed branches satisfy this condition automatically, but complete ramified branches may satisfy it as well.
\end{criterion}

\begin{proof}
Suppose the $j$-th image germ is defined on a common neighborhood $U$ of $x$ by real-analytic equations $g_{j1}=\cdots=g_{jm_j}=0$. The finite union of these image germs is analytic. Indeed, it is the common zero set of all products
$
g_{1\ell_1}\cdots g_{r\ell_r},
$
where $1\leq\ell_j\leq m_j$. If a point belongs to one branch, every such product vanishes. Conversely, if it belongs to none, one can select for every $j$ a nonvanishing $g_{j\ell_j}$, and the corresponding product does not vanish. The branch-completeness assumption identifies this finite union with the full contour near $x$.
\end{proof}

There is also a useful partial converse. Suppose the contour is not only an analytic hypersurface but a smooth embedded real-analytic hypersurface near $x$, and suppose $p\in C_H$ is the unique point above $x$ and $C_H$ is a smooth $(n-1)$-manifold near $p$. If $\Log|_{C_H}$ parametrizes the contour locally with local topological degree one, then one expects rank $n-1$ at $p$; otherwise the inverse parametrization cannot be a local analytic diffeomorphism. This conclusion requires the uniqueness and local-degree hypotheses. Without them, a ramified parametrization such as $t\mapsto t^3$ can cover a smooth analytic line while having zero derivative at the origin. Therefore even smoothness of the image alone does not force the rank condition for an arbitrary parametrizing branch.

{\bf Conclusion.}
The two conditions
$
\gamma_H\pitchfork\R\PP^{n-1}
$
and
$
\rank d(\Log|_{C_H})_p=n-1 $
for every  $p\in C_H$
are sufficient but not necessary for $\mathcal C\mathcal A_H$ to be a closed real-analytic hypersurface. Transversality may be replaced by any hypothesis yielding finitely many manageable analytic branches of the critical locus. The rank condition may be replaced by the weaker requirement that every ramified image branch be a complete analytic germ. The true obstruction is not failure of transversality or rank by itself, but the appearance of singular, one-sided, or incomplete image germs that cannot be defined using analytic equalities alone.


\section{Necessary Conditions for Analytic Amoeba Contours} 

Let $H\subset(\C^\ast)^n$ be an algebraic hypersurface, let $H_{\mathrm{sm}}$ be its smooth locus, and put
$
C_H=\Crit(\Log|_{H_{\mathrm{sm}}})
$
and
$
\mathcal C\mathcal A_H=\Log(C_H).
$
There is no known single differential condition on the logarithmic Gauss map that is both necessary and sufficient for $\mathcal C\mathcal A_H$ to be a closed real-analytic hypersurface. In particular, transversality of $\gamma_H$ to $\R\PP^{n-1}$ and maximal rank of $\Log|_{C_H}$ are not necessary. Nevertheless, analyticity imposes several rigorous necessary conditions on the contour germ and, indirectly, on the logarithmic critical locus. These conditions are weaker than transversality and immersion because they concern the image rather than every parametrizing branch.

Throughout all this work, a real-analytic hypersurface means a closed real-analytic subset that is locally of pure real dimension $n-1$ and locally is the zero set of a nonzero real-analytic function. Singularities and finitely many local branches are allowed.

\medskip

In the theorem below, the tangent cone is the \emph{algebraic tangent cone} of the real-analytic germ, namely the real affine cone defined by the initial homogeneous ideal of its local analytic ideal. Indeed, the real-analytic cusp $\{(x,y)\in\R^2:y^2=x^3\}$ has the positive $x$-axis as its set-theoretic tangent cone, and that half-axis is not a real algebraic set. Its algebraic tangent cone is the full $x$-axis, defined by $y^2=0$. Thus the distinction is essential.%

\begin{theorem}%
Let $H\subset(\C^\ast)^n$ be a smooth algebraic hypersurface, let
$
C_H=\Crit(\Log|_H),
$
and let $\mathcal C\mathcal A_H=\Log(C_H)$. Assume that $\mathcal C\mathcal A_H$ is a closed real-analytic hypersurface of $\R^n$, where ``hypersurface'' means a reduced real-analytic set of pure local dimension $n-1$. Then the following hold:
\begin{itemize}
\item[(i)]\, Every contour germ has pure local dimension $n-1$ and finitely many local real-analytic irreducible branches. Its regular locus is open and dense in every local component, while its singular locus has local dimension at most $n-2$. 

\item[(ii)] \, At every regular contour point, the contour is a smooth embedded real-analytic hypersurface without boundary and locally separates the ambient space into two sides.

\item[(iii)]\,  The algebraic tangent cone at every contour point is a real algebraic cone of dimension $n-1$; its nonreduced scheme structure or a chosen analytic stratification may contain lower-dimensional subsidiary pieces. No contour branch may occur only as one side of a larger analytic continuation. 

\item[(iv)]\, Finally, on a dense open subset of every $(n-1)$-dimensional contour component, at least one real-analytic stratum of $C_H$ maps by $\Log$ with rank $n-1$.

\end{itemize}
\end{theorem}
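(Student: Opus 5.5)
The plan is to treat (i)--(iii) as consequences of general structure theory of real-analytic sets, applied to the germ $(\CA_H,x)$, and to reserve the geometric input from $H$ for (iv).

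For (i), take $x\in\CA_H$ and a local defining function $g$ on a neighborhood $W$. I would invoke the decomposition of the germ $(\CA_H,x)$ into finitely many irreducible real-analytic germs, which follows from Noetherianity of the ring of convergent power series $\R\{y-x\}$. Pure dimension $n-1$ is part of the hypothesis. The regular locus of a real-analytic set is open by definition. Density of $\Reg$ in each local component, together with $\dim\Sing\le n-2$ in a pure $(n-1)$-dimensional analytic set, are standard facts (Łojasiewicz, or via a Whitney stratification \cite{Whitney65}); the singular locus is contained in a proper analytic subset of each component.

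For (ii), at a regular point I would apply the implicit function theorem to a defining function with nonzero differential after reducing to the ideal of the germ. This gives a smooth embedded analytic hypersurface without boundary, and such a hypersurface locally separates a small ball into two sides, as the graph of an analytic function does. For (iii), the algebraic tangent cone is the zero set of the initial forms of the local ideal, so it is real algebraic by construction. Its dimension equals $n-1$ because the dimension of the tangent cone equals the dimension of the germ; I would cite this rather than reprove it. The clause excluding one-sided branches I would deduce exactly as in the half-axis argument of the previous section: if a branch were a proper closed half of an irreducible analytic germ, then the identity theorem applied to the defining equations would force the whole continuation into $\CA_H$.

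For (iv), which I expect to be the main step, I would take a locally finite real-analytic (Whitney) stratification of $C_H$ compatible with the rank of $\Log$, as recalled in the introduction \cite{Hardt75,Hironaka73}, so that on each stratum $S$ the rank of $d(\Log|_S)$ is constant. Fix an $(n-1)$-dimensional component $Y$ and a relatively compact open set $Y'\subset\Reg(Y)$. By properness (Lemma on properness of $\Log|_H$), only finitely many strata meet $\Log^{-1}(\overline{Y'})$. Strata of constant rank $\le n-2$ have images that are countable unions of submanifolds of dimension $\le n-2$, by the constant-rank theorem. Their images are therefore closed nowhere dense subsets of $Y'$, since they are compact images of closures and of dimension $\le n-2$. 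The complement $Y'^\circ$ is then open and dense, and every point in it must come from a stratum of rank $n-1$. Taking the union over exhausting $Y'$ gives the dense open set $Y^\circ$.

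The delicate point is closedness of the low-rank images. I would use the closures of the low-rank strata, which are unions of strata of the same or lower rank by the frontier condition, together with properness, so that their images are closed subanalytic sets of dimension $\le n-2$. Being closed and of dimension $\le n-2$, these images are nowhere dense in the $(n-1)$-manifold $\Reg(Y)$.
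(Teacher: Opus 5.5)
Your proposal is correct and follows the paper's proof essentially step for step. For (i) you use Noetherianity and standard structure theory, for (ii) the implicit function theorem, for (iii) initial forms together with the identity theorem, and for (iv) a rank-adapted stratification combined with properness of $\Log|_{C_H}$.

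The only real variation is in (iv). You get closedness of the low-rank image from properness and from the fact that the closure of a low-rank stratum consists of low-rank strata. Note that this fact comes from Whitney's condition (a) together with lower semicontinuity of rank, not from the frontier condition alone. The paper instead passes to $\overline{E_U}$ and uses that taking the closure does not raise the dimension of a subanalytic set.
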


\begin{proof}
Put $X=\mathcal C\mathcal A_H$. The hypothesis that $X$ is a real-analytic subset of $\R^n$ means that, for every $x\in X$, there are an open neighborhood $U\subset\R^n$ of $x$ and finitely many real-analytic functions $g_1,\ldots,g_s\in\mathcal O_{\R^n}(U)$ such that
$$
X\cap U=\{y\in U:g_1(y)=\cdots=g_s(y)=0\}.
$$
The local analytic ideal of the germ $X_x$ is
$$
\mathcal I_{X,x}=\{g\in\mathcal O_{\R^n,x}:g|_{X_x}=0\}.
$$
Because $X$ is assumed reduced, this is the full vanishing ideal of the germ. Because $X$ is assumed to be a real-analytic hypersurface in the pure-dimensional sense, every sufficiently small local component of $X_x$ has real-analytic dimension $n-1$. This already proves the purity assertion, but the remaining consequences require a closer examination of the local analytic structure.

The local ring $\mathcal O_{\R^n,x}$ of real-analytic germs at $x$ is Noetherian. One way to see this is to identify it, after translating $x$ to the origin, with the ring $\R\{u_1,\ldots,u_n\}$ of convergent real power series and then apply the Weierstrass preparation and division theorems. Hence the quotient
$$
\mathcal O_{X,x}=\mathcal O_{\R^n,x}/\mathcal I_{X,x}
$$
is also Noetherian. A radical ideal in a Noetherian ring has only finitely many minimal prime ideals. If $\mathfrak p_1,\ldots,\mathfrak p_r$ are the minimal prime ideals over $\mathcal I_{X,x}$, the germs determined by these primes are precisely the local irreducible analytic branches of the reduced germ $X_x$. It follows that $X_x$ has only finitely many local irreducible branches. The purity hypothesis says that every one of these branches has local dimension $n-1$.

We next examine the regular and singular loci. A point $y\in X$ is regular if there is a neighborhood $V$ of $y$ such that $X\cap V$ is a real-analytic submanifold of $\R^n$ of dimension $n-1$. Equivalently, after choosing local analytic generators $h_1,\ldots,h_q$ of the ideal of $X$, the differentials $dh_j(y)$ have the locally maximal rank needed to cut out a manifold of dimension $n-1$. The relevant nonvanishing condition is expressed by a suitable Jacobian minor. Since nonvanishing of an analytic function is an open condition, $\Reg(X)$ is open in $X$.

Let $B$ be any local irreducible branch of $X_x$. Its singular locus is a proper real-analytic subset of $B$. To justify the word ``proper,'' suppose conversely that every point of a nonempty relatively open subset of $B$ were singular. Take a local analytic presentation of the reduced branch and choose generators of its prime ideal. The vanishing of all Jacobian minors of the expected rank on a relatively open subset of the irreducible branch forces those minors, by the real-analytic identity principle on the regular strata and analytic continuation, to vanish identically on the branch. The Jacobian criterion, applied after complexifying the convergent power-series ideal and then returning to the real germ, would force the local dimension of the reduced branch to be larger than its assumed dimension $n-1$, a contradiction. Equivalently, one may invoke the regular-stratification theorem for real-analytic sets: every reduced real-analytic set admits a locally finite real-analytic stratification, and the union of the strata of maximal dimension is open and dense in each pure-dimensional local component. Therefore $\Reg(X)\cap B$ is dense in $B$.

Since $B$ has dimension $n-1$ and $B\setminus\Reg(X)$ is a proper analytic subset of $B$, the dimension-drop theorem for real-analytic sets gives
$$
\operatorname{dim}_{\mathrm{loc},y}\bigl(B\setminus\Reg(X)\bigr)\leq n-2
$$
at every point $y$ of that singular subset. There are only finitely many local branches. Their singular loci have dimension at most $n-2$, and the intersection of two distinct branches also has dimension at most $n-2$; otherwise two irreducible branches of dimension $n-1$ would agree on a relatively open subset and hence would define the same germ. The singular locus of the union is contained in the union of the singular loci of the branches and their pairwise intersections. Consequently,
$$
\operatorname{dim}_{\mathrm{loc},y}\Sing(X)\leq n-2
$$
for every $y\in\Sing(X)$.

Let $x\in\Reg(X)$. By definition, $X$ is a real-analytic embedded submanifold of codimension one near $x$. There is therefore a real-analytic function $\rho$ on a neighborhood $U$ of $x$ for which $d\rho(x)\neq0$ and
$$
X\cap U=\{y\in U:\rho(y)=0\}.
$$
After shrinking $U$, continuity gives $d\rho(y)\neq0$ for every $y\in U$. The real-analytic implicit-function theorem supplies real-analytic coordinates $(u_1,\ldots,u_n)$ centered at $x$ in which $\rho=u_n$. In these coordinates, $X\cap U$ is $\{u_n=0\}$. It has no boundary as a manifold, because a neighborhood of each of its points is analytically diffeomorphic to an open subset of $\R^{n-1}$, rather than to a half-space. Moreover,
$$
U\setminus X=\{u_n>0\}\,\cup\,\{u_n<0\}.
$$
After taking $U$ to be a sufficiently small coordinate ball, both sets on the right are connected and nonempty. Thus the contour locally separates $\R^n$ into exactly two sides at every regular point.

We now prove the tangent-cone assertion under the convention stated above. Translate $x$ to the origin and write $\mathfrak m$ for the maximal ideal of $\mathcal O_{\R^n,x}$. If $0\neq g\in\mathcal I_{X,x}$ has convergent expansion $g=g_\nu+g_{\nu+1}+\cdots$, where every $g_j$ is homogeneous of degree $j$ and $g_\nu\neq0$, let $\operatorname{in}(g)=g_\nu$. The initial ideal is the homogeneous ideal
$$
\operatorname{in}(\mathcal I_{X,x})
=\bigl(\operatorname{in}(g):g\in\mathcal I_{X,x}\bigr)
\subset\R[u_1,\ldots,u_n].
$$
The algebraic tangent cone is
$$
C_x^{\mathrm{alg}}X
=V_{\R}\bigl(\operatorname{in}(\mathcal I_{X,x})\bigr).
$$
It is a real algebraic cone because its defining ideal is homogeneous. The associated graded ring of the local analytic ring is naturally
$$
\operatorname{gr}_{\mathfrak m}\mathcal O_{X,x}
\simeq
\R[u_1,\ldots,u_n]\big/\operatorname{in}(\mathcal I_{X,x}).
$$
Passing from a Noetherian local ring to its associated graded ring preserves Krull dimension. Hence
$$
\dim \operatorname{gr}_{\mathfrak m}\mathcal O_{X,x}
=\dim\mathcal O_{X,x}
=n-1.
$$
Thus the algebraic tangent cone has algebraic dimension $n-1$. The initial ideal need not be radical, even when the original analytic germ is reduced, and the cone can therefore carry multiplicities or embedded lower-dimensional associated components. A real-analytic or Whitney stratification of its real support may likewise contain strata of dimension smaller than $n-1$. These are the lower-dimensional subsidiary pieces mentioned in the statement; they do not alter the dimension of the cone.

We next formalize the assertion that an analytic branch cannot stop after occupying only one side of an analytic continuation. Let $B$ be an irreducible branch of $X_x$, and suppose that there is a connected smooth real-analytic hypersurface germ $M_x$ such that $B$ contains a nonempty relatively open subset $W$ of $M$ whose closure in $M$ contains $x$. Every germ $g\in\mathcal I_{B,x}$ vanishes on $W$. The restriction $g|_M$ is real analytic. By the identity theorem on the connected real-analytic manifold $M$, it vanishes on the entire germ $M_x$. Therefore $\mathcal I_{B,x}\subset\mathcal I_{M,x}$, which gives $M_x\subset B_x$. Both germs have dimension $n-1$. Since $B_x$ is irreducible and $M_x$ already contains a relatively open part of it, equality follows: $B_x=M_x$. In particular, a putative half-hypersurface with an analytic continuation across its edge cannot itself be a local real-analytic branch. This is the precise analytic meaning of the no-one-sided-branch assertion.

It remains to prove the rank statement. The map $\Log:(\C^\ast)^n\to\R^n$ is real analytic. It is also proper. Indeed, if $K\subset\R^n$ is compact, then every $z=(z_1,\ldots,z_n)\in\Log^{-1}(K)$ has each modulus $|z_j|$ bounded above and bounded away from zero. Writing $z_j=e^{x_j+i\theta_j}$ identifies $\Log^{-1}(K)$ with $K\times(S^1)^n$, and this set is compact. Since $H$ is closed in $(\C^\ast)^n$ and $C_H$ is the zero set in $H$ of the appropriate minors of $d(\Log|_H)$, the set $C_H$ is closed. Consequently, $\Log|_{C_H}:C_H\to\R^n$ is proper.

The critical locus $C_H$ is a real-analytic, hence semianalytic, subset of the real-analytic manifold underlying $H$. Choose a locally finite real-analytic stratification $\mathscr S$ of $C_H$ such that the rank of $\Log|_S$ is constant on every stratum $S\in\mathscr S$. Such a stratification is obtained by first stratifying the analytic rank loci defined by the minors of $d\Log$ and then refining to a Whitney stratification. Because every point of $C_H$ is critical for $\Log|_H$ and $\dim_{\R}H=2n-2$, one has
$$
\rank d(\Log|_S)\leq n-1
$$
on every stratum.

Fix an irreducible local component $B$ of $X$ of dimension $n-1$ and choose a relatively compact open neighborhood $U\subset\R^n$ meeting only the finitely many contour branches under consideration. Properness implies that $\Log^{-1}(\overline U)\cap C_H$ is compact. Local finiteness of $\mathscr S$ therefore implies that only finitely many strata meet this compact set. Let $E_U$ be the union, over those strata $S$ for which $\rank d(\Log|_S)\leq n-2$, of $\Log(S)\cap U$. By the constant-rank theorem, the local dimension of $\Log(S)$ is at most $n-2$. Since only finitely many strata occur over $\overline U$, the closure $\overline{E_U}$ is a subanalytic subset of $U$ of dimension at most $n-2$.

Remove from the regular part of $B\cap U$ the set $\overline{E_U}$, the intersections with the other local components of $X$, and the lower-dimensional frontiers of the maximal strata in a stratification of $X$. The resulting set $U_B^\circ$ is relatively open in $B$, and it is dense because every removed set has local dimension at most $n-2$, whereas $B$ has dimension $n-1$. Take $y\in U_B^\circ$. Since $y\in X=\Log(C_H)$, there exists $p\in C_H$ with $\Log(p)=y$. Let $S$ be the stratum containing $p$. If $\rank d(\Log|_S)_p\leq n-2$, then $y\in E_U\subset\overline{E_U}$, contrary to the definition of $U_B^\circ$. Hence
$$
\rank d(\Log|_S)_p=n-1.
$$
Thus every point of the dense open set $U_B^\circ$ has at least one preimage lying on a real-analytic stratum of $C_H$ on which $\Log$ has rank $n-1$. Applying this construction in a locally finite family of neighborhoods and taking the union of the resulting open subsets proves the assertion on a dense open subset of every $(n-1)$-dimensional contour component.
All the conclusions now follow.
\end{proof}


\begin{proposition}%
If $H\subset(\C^\ast)^n$ is smooth, then $\mathcal C\mathcal A_H$ is closed independently of whether it is analytic. Thus, in the smooth case, closedness is automatic and is not an additional necessary restriction on the logarithmic Gauss map.
\end{proposition}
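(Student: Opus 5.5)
The argument is the properness argument already used for the earlier closedness proposition; nothing about analyticity, transversality or rank enters. The plan has three steps.

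\textbf{Step 1: $C_H$ is closed in $H$.} By Mikhalkin's lemma, $C_H=\gamma_H^{-1}(\R\PP^{n-1})$. Since $H$ is smooth, the logarithmic Gauss map $\gamma_H$ is defined and continuous on all of $H$: the vector $(z_1f_{z_1},\ldots,z_nf_{z_n})$ never vanishes there. Moreover $\R\PP^{n-1}$ is closed in $\C\PP^{n-1}$. Hence $C_H$ is closed in $H$. As a fallback, one can instead describe $C_H$ as the common zero locus of the maximal minors of $d(\Log|_H)$, which are continuous functions on $H$.

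\textbf{Step 2: $\Log|_{C_H}$ is proper.} The earlier properness lemma shows that $\Log|_H$ is proper. For compact $K\subset\R^n$, the set $H\cap\Log^{-1}(K)$ is closed in the compact product of annuli $e^{-M}\le|z_j|\le e^M$, so it is compact. Intersecting with the closed set $C_H$ keeps it compact, so $\Log|_{C_H}$ is proper.

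\textbf{Step 3: the image is closed.} Take $x_\nu=\Log(p_\nu)\to x$ with $p_\nu\in C_H$. The set $K=\{x\}\cup\{x_\nu\}$ is compact. By Step 2, a subsequence of $(p_\nu)$ converges to some $p\in C_H$, and continuity gives $\Log(p)=x$. Thus $x\in\mathcal C\mathcal A_H$, so $\mathcal C\mathcal A_H$ is closed. Equivalently, a proper continuous map between locally compact Hausdorff spaces is closed.

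The final sentence of the proposition, that closedness is not an additional restriction on $\gamma_H$, then follows at once. Closedness holds for every smooth $H$, so it cannot distinguish analytic contours from non-analytic ones, and it imposes no constraint on $\gamma_H$. There is no real obstacle here. The only point needing care is Step 1, specifically that $\gamma_H$ is defined and continuous everywhere on $H$, and this is exactly where the smoothness of $H$ is used.
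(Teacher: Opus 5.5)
Your proposal is correct and follows essentially the same route as the paper's proof. You show that $C_H=\gamma_H^{-1}(\R\PP^{n-1})$ is closed in $H$, derive properness of $\Log|_{C_H}$ from compact products of annuli, and conclude that the image of a proper map is closed; your sequential Step 3 is just an explicit version of the paper's one-line appeal to that last fact.
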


\begin{proof}
For a smooth hypersurface,
$
C_H=\gamma_H^{-1}(\R\PP^{n-1}).
$
The real projective space is closed in $\C\PP^{n-1}$, so $C_H$ is closed in $H$. The restriction $\Log|_H:H\to\R^n$ is proper. Indeed, if $K\subset\R^n$ is compact, then there is an $M>0$ such that every $z\in\Log^{-1}(K)$ satisfies
$
e^{-M}\leq|z_j|\leq e^M
$
for all $j$. Hence $H\cap\Log^{-1}(K)$ is a closed subset of a compact product of annuli and is compact. The restriction $\Log|_{C_H}$ is therefore proper. A proper continuous map into a Hausdorff space has closed image, and consequently
$
\mathcal C\mathcal A_H=\Log(C_H)
$
is closed.
\end{proof}

For a singular hypersurface, closedness is no longer automatic when the contour is defined using only $H_{\mathrm{sm}}$. Smooth critical points may converge to a singular point that has been removed from the domain, and the limiting logarithmic value may have no other smooth preimage. Therefore the following condition is necessary.

\begin{proposition}%
Suppose $H$ is singular and $\mathcal C\mathcal A_H=\Log(\Crit(\Log|_{H_{\mathrm{sm}}}))$ is closed. If $p_\nu\in C_H$ converges in $H$ to a singular point $p\in\Sing(H)$ and $\Log(p_\nu)\to x$, then $x$ must be the logarithmic image of some smooth critical point. Equivalently,
$
\overline{\Log(C_H)}=\Log(C_H).
$
\end{proposition}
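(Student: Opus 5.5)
The statement is essentially a reformulation of closedness, so I will prove it by unwinding definitions rather than by any new geometric input. I will first establish the implication as stated. Suppose $\CA_H=\Log(C_H)$ is closed in $\R^n$, and let $p_\nu\in C_H$ converge in $H$ to $p\in\Sing(H)$ with $\Log(p_\nu)\to x$. Each $\Log(p_\nu)$ lies in $\CA_H$, so $x$ is a limit of contour points and lies in $\overline{\Log(C_H)}$. By closedness, $x\in\Log(C_H)$, so there is a smooth critical point $q\in C_H\subset H_{\mathrm{sm}}$ with $\Log(q)=x$. Continuity of $\Log$ on $(\C^\ast)^n$ also gives $x=\Log(p)$. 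Hence the logarithmic image of the removed singular point must be shared by a smooth critical point. This uses nothing beyond the definitions.

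For the equivalence with $\overline{\Log(C_H)}=\Log(C_H)$ I will show the converse: if every such limit value is attained by a smooth critical point, then the contour is closed. Take $x_\nu\in\Log(C_H)$ with $x_\nu\to x$, and choose $p_\nu\in C_H$ with $\Log(p_\nu)=x_\nu$. By the properness argument already used for $\Log|_H$ (the preimage of the compact set $\{x\}\cup\{x_\nu\}$ lies in a compact product of annuli, and $H$ is closed in the torus), a subsequence of $p_\nu$ converges to some $p\in H$. There are two cases.

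\emph{Case 1: $p\in H_{\mathrm{sm}}$.} The critical locus $C_H$ is closed in $H_{\mathrm{sm}}$, being cut out by the vanishing of the Jacobian minors as in the semianalyticity theorem, or by $\gamma^{-1}(\R\PP^{n-1})$ on the smooth part. Therefore $p\in C_H$, and $x=\Log(p)\in\Log(C_H)$.

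\emph{Case 2: $p\in\Sing(H)$.} This is exactly the hypothesis of the proposition, which supplies a smooth critical point over $x$.

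In either case $x\in\Log(C_H)$, so $\overline{\Log(C_H)}=\Log(C_H)$.

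The only point requiring care is the compactness step. The subsequence must converge inside $H$ rather than escaping to the coordinate hyperplanes or to infinity. This is the content of the properness lemma for $\Log|_H$, which holds verbatim for singular $H$ because only closedness of $H$ in $(\C^\ast)^n$ was used. I expect no real obstacle. The substantive remark is that, unlike in the smooth case, $C_H$ need not be closed in $H$, so the singular-limit case cannot be eliminated and must be handled by hypothesis.
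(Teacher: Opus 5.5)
Your proof is correct. Your forward implication is exactly the paper's argument: each $\Log(p_\nu)$ is a contour point, closedness puts the limit $x$ in $\Log(C_H)$, and hence some $q\in C_H\subset H_{\mathrm{sm}}$ satisfies $\Log(q)=x$.

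Where you go beyond the paper is the word ``Equivalently.'' The paper's proof stops after the forward direction and treats $\overline{\Log(C_H)}=\Log(C_H)$ as a restatement of the closedness hypothesis. You instead prove the converse. You extract a convergent subsequence in $H$ using properness of $\Log|_H$, which, as you note, needs only that $H$ is closed in $(\C^\ast)^n$. You then split into two cases according to where the limit lies:
\begin{itemize}
\item if it lies in $H_{\mathrm{sm}}$, closedness of $C_H$ in $H_{\mathrm{sm}}$ settles it;
\item if it lies in $\Sing(H)$, the hypothesis settles it.
\end{itemize}
This establishes something the paper only suggests in the paragraph preceding the proposition. Limits of smooth critical points at singular points are the only possible obstruction to closedness of the contour. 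So the singular-limit condition is not merely necessary but also sufficient for $\CA_H$ to be closed.
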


\begin{proof}
Every $\Log(p_\nu)$ belongs to the contour. Closedness forces its limit $x$ to belong to the contour. By definition of the contour, there must exist $q\in C_H\subset H_{\mathrm{sm}}$ such that $\Log(q)=x$. If no such $q$ existed, $x$ would be a missing limit point and the contour would not be closed.
\end{proof}

The preceding condition is necessary only in the singular case. It does not require the singular point $p$ itself to be included in the critical locus; it requires its limiting logarithmic value to be recovered by some smooth critical point. If one defines instead a stratified contour using all strata of a Whitney stratification, the formulation changes, but local closedness remains necessary for analyticity.
There is a further necessary compatibility condition at branch intersections. If finitely many contour branches meet at $x$, their union must be the zero set of an analytic ideal. For hypersurface branches individually defined by $g_j=0$, this condition is satisfied when the entire local union is defined by $g_1\cdots g_r=0$. What is forbidden is selecting only inequality-defined portions of these germs. Thus transverse crossings, tangential crossings, and complete cusps may be analytic, whereas a crossing from which one half-branch has been deleted is not.

\begin{proposition}%
Let $Y$ be an irreducible local component of $\mathcal C\mathcal A_H$ of dimension $n-1$. If $Y$ is analytic, then there is a dense open subset $Y^\circ\subset\Reg(Y)$ such that for every $x\in Y^\circ$ at least one point $p\in C_H\cap\Log^{-1}(x)$ lies on a real-analytic stratum $S\subset C_H$ satisfying $\rank d(\Log|_S)_p=n-1$.
\end{proposition}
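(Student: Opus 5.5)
The plan is to repeat, locally around $Y$, the rank argument from part (iv) of the preceding theorem, using the three tools already established: properness of $\Log|_{C_H}$, a locally finite rank-adapted stratification of $C_H$, and the dimension bound for images of low-rank strata. First, $C_H$ is a closed real-analytic subset of the real-analytic manifold underlying $H$; it is the zero set of the minors of $d(\Log|_H)$, or equivalently $\gamma_H^{-1}(\R\PP^{n-1})$. I would fix a locally finite real-analytic Whitney stratification $\mathscr S$ of $C_H$ such that $\rank d(\Log|_S)$ is constant on each stratum $S$. This is obtained by stratifying the analytic rank loci cut out by minors and then refining, as in the earlier proof. Every point of $C_H$ is critical, so on every stratum this rank is at most $n-1$.

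Next, fix a point $x_0\in Y$ and a relatively compact neighborhood $U$ of $x_0$ on which $Y$ is a representative of the local component. Properness of $\Log|_{C_H}$ makes $\Log^{-1}(\overline U)\cap C_H$ compact, so only finitely many strata $S_1,\dots,S_m$ of $\mathscr S$ meet it. Let
\[
E_U=\bigcup_{\rank d(\Log|_{S_i})\le n-2}\Log(S_i)\cap U .
\]
On each such stratum $\Log$ has constant rank at most $n-2$. By the constant-rank theorem, $\Log(S_i)$ is locally a countable union of analytic submanifolds of dimension at most $n-2$, and by second countability $E_U$ is a countable union of such pieces. Since the $S_i$ are semianalytic and $\Log$ is proper on their closures, $E_U$ is subanalytic, and so $\overline{E_U}$ is a closed subanalytic set of dimension at most $n-2$.

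Now set $Y^\circ_U=\Reg(Y)\cap U\setminus\overline{E_U}$. This set is open in $\Reg(Y)$. It is dense because a closed subanalytic set of dimension at most $n-2$ has empty interior in the $(n-1)$-manifold $\Reg(Y)$, and $\Reg(Y)$ is itself dense in $Y$ by part (i) of the previous theorem. Take $x\in Y^\circ_U$. Since $Y\subset\mathcal C\mathcal A_H=\Log(C_H)$, there is some $p\in C_H$ with $\Log(p)=x$; let $S$ be its stratum. If $\rank d(\Log|_S)_p\le n-2$, then $x\in E_U$, which is impossible, so the rank is $n-1$. Finally, cover $Y$ by a locally finite family of such neighborhoods $U$ and let $Y^\circ$ be the union of the sets $Y^\circ_U$. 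Density and openness are local properties, so $Y^\circ$ is a dense open subset of $\Reg(Y)$ with the required property.

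I expect the main obstacle to be the dimension claim: we need the closure of $E_U$, not just $E_U$ itself, to be nowhere dense in $\Reg(Y)$. A countable union of lower-dimensional images already has empty interior by Baire's theorem, but taking the closure needs more. Here I would invoke subanalyticity together with the fact that the dimension of a subanalytic set does not grow under closure (Hironaka, Bierstone--Milman). Alternatively, I would sidestep the closure and use Baire directly: $Y^\circ$ can be taken to be the interior of the complement of $E_U$, whose density follows from the subanalytic frontier inequality $\dim(\overline{E}\setminus E)<\dim E$.
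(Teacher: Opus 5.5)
Your proposal is correct and follows the paper's proof essentially step by step. The shared steps are: a locally finite rank-adapted stratification of $C_H$; properness of $\Log|_{C_H}$ to reduce to finitely many strata over a relatively compact neighborhood; the constant-rank bound $\dim\le n-2$ for images of low-rank strata, combined with the subanalytic closure theorem; removal of $\overline{E_U}$ from $\Reg(Y)$; and a final locally finite covering argument. Like the paper, you implicitly use that $H$ is smooth, so that $C_H$ is closed and $\Log|_{C_H}$ is proper.
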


\begin{proof}
We first choose a precise representative of the local component. Let $x_0$ be the point at which the germ $Y$ is considered. After replacing a neighborhood of $x_0$ by a smaller relatively compact open neighborhood $U\subset\R^n$, the germ $Y$ is represented by a closed irreducible real-analytic subset, still denoted by $Y$, of $U$, and $\dim Y=n-1$. We may also arrange that only finitely many local components of $\mathcal C\mathcal A_H\cap U$ meet a smaller neighborhood of $x_0$. Since $Y$ is an irreducible analytic set, its regular locus $\Reg(Y)$ is open and dense in $Y$, and its singular locus has dimension at most $n-2$. These are standard consequences of the locally finite real-analytic stratification theorem \cite{Hardt75,Hironaka73}.

The essential point is to show that the image of the part of $C_H$ on which $\Log$ has rank at most $n-2$ cannot contain an open subset of $Y$. This requires both a rank-adapted stratification and local properness of the logarithmic map.

The map $\Log:(\C^\ast)^n\to\R^n$ is proper. Indeed, let $K\subset\R^n$ be compact. The map that sends $z=(z_1,\ldots,z_n)$ to the pair formed by $\Log(z)$ and $(z_1/|z_1|,\ldots,z_n/|z_n|)$ identifies $\Log^{-1}(K)$ homeomorphically with $K\times(S^1)^n$. Thus $\Log^{-1}(K)$ is compact.

Because $H$ is smooth, the differential $d(\Log|_H)$ is a real-analytic bundle morphism from $TH$ to the trivial bundle $H\times\R^n$. Its critical locus $C_H$ is locally defined by the vanishing of the maximal minors of a real-analytic matrix representing this morphism. Hence $C_H$ is a closed real-analytic subset of $H$. It follows that $\Log|_{C_H}:C_H\to\R^n$ is proper.

Choose a locally finite real-analytic Whitney stratification $\mathscr S$ of $C_H$ that is adapted to all rank loci of $\Log|_{C_H}$. Concretely, for each integer $r$, the condition $\rank d(\Log|_H)\leq r$ is given locally by the vanishing of the $(r+1)\times(r+1)$ minors of a real-analytic matrix, and one first chooses a stratification compatible with these analytic rank loci. Refining it if necessary gives a Whitney stratification on which $\rank d(\Log|_S)$ is constant for every stratum $S\in\mathscr S$. The existence of such a compatible stratification is a standard form of the real-analytic mapping-stratification theorem \cite{Hardt75}; the subanalytic properties of its images under proper real-analytic maps are established in \cite{Hironaka73,BM88}.

For every $p\in C_H$, the differential $d(\Log|_H)_p$ has rank at most $n-1$, because $p$ is a critical point of a map with target dimension $n$. Since $T_pS\subset T_pH$ for the stratum $S$ containing $p$, restriction cannot increase rank. Consequently, $\rank d(\Log|_S)_p\leq n-1$ on every stratum.

Choose an open set $V$ with $x_0\in V$ and compact closure $\overline V\subset U$. Properness gives that
$$
K_V=C_H\cap\Log^{-1}(\overline V)
$$
is compact. Since $\mathscr S$ is locally finite, only finitely many strata of $\mathscr S$ meet $K_V$. Denote these strata by $S_1,\ldots,S_N$, and let $I_{\mathrm{low}}$ be the set of indices $j$ for which the constant rank of $\Log|_{S_j}$ is at most $n-2$. Define
$$
E_V=\bigcup_{j\in I_{\mathrm{low}}}\Log(S_j\cap\Log^{-1}(V)).
$$

We claim that $\dim E_V\leq n-2$. Fix $j\in I_{\mathrm{low}}$ and write $r_j=\rank d(\Log|_{S_j})\leq n-2$. The real-analytic constant-rank theorem shows that, in suitable local analytic coordinates on $S_j$ and on $\R^n$, the restriction $\Log|_{S_j}$ has the local form
$$
(t_1,\ldots,t_{\dim S_j})\longmapsto(t_1,\ldots,t_{r_j},0,\ldots,0).
$$
Its local image therefore has dimension $r_j\leq n-2$. The set $\Log(S_j\cap\Log^{-1}(V))$ is subanalytic, because the map is real analytic and the relevant restriction is relatively proper over compact subsets of $V$. A finite union of subanalytic sets is subanalytic, and the dimension of a finite union is the maximum of the dimensions of its members. Hence $E_V$ is subanalytic and $\dim E_V\leq n-2$. The closure theorem for subanalytic sets gives $\dim\overline{E_V}=\dim E_V\leq n-2$.

Set
$$
Z_V=Y\cap\overline{E_V}.
$$
Since $\dim Y=n-1$ and $\dim\overline{E_V}\leq n-2$, the set $Z_V$ cannot contain a nonempty relatively open subset of $Y$. In particular, $Y\setminus Z_V$ is dense in $Y$. The set
$$
Y_V^\circ=\Reg(Y)\cap V\setminus\overline{E_V}
$$
is relatively open in $\Reg(Y)\cap V$ and dense in $Y\cap V$, because both $Y\setminus\Reg(Y)$ and $Z_V$ have dimension at most $n-2$.

Let $x\in Y_V^\circ$. Since $Y\subset\mathcal C\mathcal A_H$ as a local contour component, $x\in\Log(C_H)$. Therefore there exists $p\in C_H$ such that $\Log(p)=x$. Because $x\in V$, the point $p$ belongs to $K_V$ and hence lies on one of the finitely many strata $S_1,\ldots,S_N$. Let $S$ be the stratum containing $p$. If $\rank d(\Log|_S)_p\leq n-2$, then $S=S_j$ for some $j\in I_{\mathrm{low}}$, and the equality $\Log(p)=x$ would imply $x\in E_V\subset\overline{E_V}$. This contradicts the choice of $x$. We must therefore have $\rank d(\Log|_S)_p\geq n-1$. The opposite inequality was established above, so
$
\rank d(\Log|_S)_p=n-1.
$

This proves the assertion on the dense relatively open subset $Y_V^\circ$ of the chosen representative near $x_0$. To formulate the conclusion intrinsically on the whole local component, cover $\Reg(Y)$ by a countable locally finite family of relatively compact neighborhoods $V_\alpha\Subset U$ of the preceding type, and define $Y^\circ$ as the union of the corresponding sets $Y_{V_\alpha}^\circ$. This union is open in $\Reg(Y)$. It is dense in $Y$ because, in every $V_\alpha$ meeting $Y$, its complement is contained in a subanalytic set of dimension at most $n-2$. For every $x\in Y^\circ$, the construction supplies a point $p\in C_H\cap\Log^{-1}(x)$ and a real-analytic stratum $S\subset C_H$ through $p$ such that $\rank d(\Log|_S)_p=n-1$. The proposition follows.
\end{proof}

{\bf Summary of the necessary conditions.}
If $\mathcal C\mathcal A_H$ is a closed real-analytic hypersurface of $\R^n$, then it must be locally closed, pure of dimension $n-1$, locally finite in its analytic branch structure, and free of uncompleted one-sided image germs. Its regular locus must be dense, its reduced singular locus must have codimension at least one inside the contour, and every tangent cone must be real algebraic. On each contour component, the logarithmic map must attain rank $n-1$ on some critical stratum over a dense open subset, although rank may drop at exceptional analytic cusps or other complete ramified germs. If $H$ is singular, no limiting critical value may be lost at $\Sing(H)$. These conditions are necessary but not, individually or collectively in this elementary form, sufficient; sufficiency additionally requires that the local image branches glue as complete analytic germs.


\begin{thebibliography}{99}

\bibitem{BM88}
E. Bierstone and P.~D. Milman,
Semianalytic and subanalytic sets,
\emph{Inst. Hautes \'Etudes Sci. Publ. Math.} \textbf{67} (1988), 5--42.

 
\bibitem{BCR98}
J. Bochnak, M. Coste, and M.-F. Roy,
\emph{Real Algebraic Geometry},
Ergebnisse der Mathematik und ihrer Grenzgebiete, vol.~36,
Springer-Verlag, Berlin, 1998.

\bibitem{Hardt75}
R.~M. Hardt,
Stratification of real analytic mappings and images,
\emph{Invent. Math.} \textbf{28} (1975), 193--208.

\bibitem{Hironaka73}
H. Hironaka,
Subanalytic sets,
in \emph{Number Theory, Algebraic Geometry and Commutative Algebra},
in honor of Yasuo Akizuki,
Kinokuniya, Tokyo, 1973, pp.~453--493.


\bibitem{Lang19}
L. Lang,
Amoebas of curves and the Lyashko--Looijenga map,
\emph{J. Lond. Math. Soc.} (2) \textbf{100} (2019), no.~1, 301--322.

\bibitem{LangShapiroShustin21}
L. Lang, B. Shapiro, and E. Shustin,
On the number of intersection points of the contour of an amoeba with a line,
\emph{Indiana Univ. Math. J.} \textbf{70} (2021), no.~4, 1335--1353.

\bibitem{Mikhalkin00}
G. Mikhalkin,
Real algebraic curves, the moment map and amoebas,
\emph{Ann. of Math.} (2) \textbf{151} (2000), no.~1, 309--326.

\bibitem{Mikhalkin04}
G. Mikhalkin,
Amoebas of algebraic varieties and tropical geometry,
in \emph{Different Faces of Geometry},
International Mathematical Series, vol.~3,
Kluwer Academic/Plenum Publishers, New York, 2004, pp.~257--300.


\bibitem{Mikhalkin04Pants}
G. Mikhalkin,
Decomposition into pairs-of-pants for complex algebraic hypersurfaces,
\emph{Topology} \textbf{43} (2004), no.~5, 1035--1065.
\href{https://doi.org/10.1016/j.top.2003.11.006}
{https://doi.org/10.1016/j.top.2003.11.006}.



\bibitem{Whitney65}
H. Whitney,
Tangents to an analytic variety,
\emph{Ann. of Math.} (2) \textbf{81} (1965), 496--549.

\end{thebibliography}
\end{document}